\DeclareFontShape{T1}{lmr}{b}{sc}{<->ssub*cmr/bx/sc}{}
\DeclareFontShape{T1}{lmr}{bx}{sc}{<->ssub*cmr/bx/sc}{}
\definecolor{lightred}{rgb}{1, 0.5, 0.5}
\definecolor{lightgrey}{rgb}{0.6, 1, 0.6}
\DeclareFontFamily{U}{mathx}{\hyphenchar\font45}
\DeclareFontShape{U}{mathx}{m}{n}{
      <5> <6> <7> <8> <9> <10>
      <10.95> <12> <14.4> <17.28> <20.74> <24.88>
      mathx10
      }{}
\DeclareSymbolFont{mathx}{U}{mathx}{m}{n}
\DeclareMathSymbol{\bigtimes}{1}{mathx}{"91}
    \newcounter {subsubsubsection}[subsubsection]
    \renewcommand\thesubsubsubsection{\thesubsubsection .\@arabic\c@subsubsubsection}
    \newcommand\subsubsubsection{\@startsection{subsubsubsection}{4}{\z@}%
                                         {-3.25ex\@plus -1ex \@minus -.2ex}%
                                         {1.5ex \@plus .2ex}%
                                         {\normalfont\normalsize\itshape}}
    \newcommand*\l@subsubsubsection{\@dottedtocline{4}{10.0em}{4.1em}}
    \newcommand*{\subsubsubsectionmark}[1]{}
  \theoremstyle{plain}
\newtheorem{theorem}{Theorem}[section]
\newtheorem{conjecture}[theorem]{Conjecture} 
\newtheorem{lemma}[theorem]{Lemma} 
\newtheorem{example}[theorem]{Example} 
  \theoremstyle{remark}
\newtheorem{remark}[theorem]{Remark}
  \theoremstyle{definition}
\newtheorem{definition}[theorem]{Definition}
\newcommand{\G}{\mathcal{G}}
\newcommand{\NN}{\mathbb{N}}
\newcommand{\ZZ}{\mathbb{Z}}
\newcommand{\RR}{\mathbb{R}}
\newcommand{\D}{\mathbb{D}}
\renewcommand{\SS}{\mathbb{S}}
\renewcommand{\i}{\iota}
\newcommand{\<}{\left\langle\!\left\langle}
\renewcommand{\>}{\left\rangle\!\left\rangle}
\newcommand{\I}{\operatorname{I}}
\newcommand{\rot}{\operatorname{rot}} 
\newcommand{\FH}{\operatorname{FH}}
\newcommand{\T}{\operatorname{T}}
\newcommand{\p}{^\prime}
\newcommand{\pp}{^{\prime\prime}}
\newcommand{\lk}{\operatorname{lk}}
\newcommand{\far}{\operatorname{far}}
\newcommand{\Ker}{\operatorname{Ker}}
\renewcommand{\tt}{$3$-$3$}
\newcommand{\td}{$3$-$2$}
\newcommand{\du}{$2$-$1$}
\newcommand{\dd}{$2$-$2$}
\newcommand{\E}{\operatorname{E}}
\newcommand{\K}{\mathcal{K}}
\renewcommand{\<}{\left\langle}
\renewcommand{\>}{\right\rangle}
\title{Combinatorial cohomology of the space of long knots}
\author{Arnaud Mortier}
\date{\today}
\begin{document}

\maketitle

\begin{abstract}
\footnotesize
The motivation of this work is to define cohomology classes in the space of knots that are both easy to find and to evaluate, by reducing the problem to simple linear algebra.
We achieve this goal by defining a combinatorial graded cochain complex, such that the elements of an explicit submodule in the cohomology define algebraic intersections with some \enquote{geometrically simple} strata in the space of knots. Such strata are endowed with explicit co-orientations, that are canonical in some sense. The combinatorial tools involved are natural generalisations (degeneracies) of usual methods using arrow diagrams.
\end{abstract}

\tableofcontents
\vspace*{0.5cm}
The paper is organised as follows.

In Section~\ref{sec:leaf}, we build a prototypical cochain complex which contains all the essential combinatorics while using the most simple input, namely a finite collection of finite subsets of $\RR$ (a \textit{colored leaf diagram}). The point of this preliminary is both theoretical and to point out clearly that this part of our construction does not depend on the material introduced further.

In Section~\ref{sec:strata}, we show that the incidence signs of the previous cochain complex are of topological nature, as they are an essential ingredient in the computation of the boundary of the meridian discs of some \enquote{geometrically simple} strata in the space of knots, provided that these discs are correctly oriented. This property canonically defines a co-orientation of simple strata.

Simple strata are represented by means of degenerated Gauss diagrams, i.e., whose arrows are allowed to meet on the base circle. Then, in Section~\ref{sec:main},  similarly to Polyak-Viro's formulas for finite-type invariants, we define cochains by counting subconfigurations in those diagrams, with weights given by products of writhes. A little twist appears here: we do not count the signs of arrows that participate in singularities; these signs contribute implicitly, via the definition of the canonical co-orientation. 

At the end of the section we construct the main cochain complex, which is a slightly twisted version of that of Section~\ref{sec:leaf}, and construct a Stokes formula relating it with the boundary maps from Section~\ref{sec:strata},  that model the meridians of simple strata. The announced result follows.

The last section is a review of examples, including new formulas for the low degree Vassiliev invariants obtained by integrating $1$- and $2$-cocycles over some canonical $1$- and $2$-chains. In particular we give a method for integrating our $1$-cocycle formulas into knot invariants without any computations, over the two main canonical cycles in the space of knots -- namely the Gramain loop, and the Fox-Hatcher loop.

\subsection*{Acknowledgements}

I wish to express my full gratitude to Seiichi Kamada for his hospitality and the opportunity he gave me to pursue research for one year in excellent conditions in Osaka City University. I am grateful to Allen Hatcher, Thomas Fiedler, Victoria Lebed and Ryan Budney for helpful discussions and comments.

\section{Cohomology of coloured leaf diagrams in $\RR$}\label{sec:leaf}


\subsection{Polygons}

A \textit{polygon} is a finite subset of the oriented based circle $\SS^1=\RR\cup \left\lbrace \infty \right\rbrace$. We make no distinction between a polygon and the corresponding singular $0$-chain in $C_0(\SS^1,\ZZ_2)$. It is said to be \textit{even} or \textit{odd} according to the parity of its cardinality -- in other words, odd polygons are those representing the non-trivial homology class in $H_0(\SS^1,\ZZ_2)$.

Let $P$ and $P\p$ be two disjoint even polygons. Then they have a well-defined \textit{linking number}, denoted by $\lk(P,P\p)\in \ZZ_2=\left\lbrace \pm 1 \right\rbrace $, which is the algebraic intersection between $P\p$ and any $1$-chain in $C_1(\SS^1,\ZZ_2)$ whose boundary is $P$. The map $\lk$ is symmetric, and bilinear in the sense that if $P$ and $P\p$ are disjoint, as well as $P$ and $P\pp$, then $$\lk(P,P\p+P\pp)=\lk(P,P\p)+ \lk(P,P\pp).$$
If $P\p$ is odd and $P$ has two elements, again with $P\cap P\p=\emptyset$, we extend the notation by setting $$\lk(P,P\p)= \lk(P\p,P)\stackrel{\text{def}}{=}(-1)^{ \sharp \, \left[ \min(P),\,\max(P)\right]\, \cap \,P\p}, $$
where we agree that the point $\infty$ is \textit{greater} than any real number.
Note that the same formula holds when $P\p$ is even. The map $\lk$ can then be extended by symmetry and bilinearity to any couple of disjoint polygons at least one of which is even.

We define a partial order on the set of polygons by setting:
$$P< P\p \Longleftrightarrow \inf(P)< \inf(P\p).$$

\subsection{coloured leaf diagrams}

A \textit{(coloured) leaf diagram} in $\RR= \SS^1\setminus \left\lbrace\infty \right\rbrace $ is a finite collection of pairwise disjoint polygons, none of which contains $\infty$. The elements of the polygons are called \textit{leaves} of the diagram, and two leaves from the same polygon are said to have the same \textit{colour}. The terminology is inspired from the fact that such diagrams are meant to be later completed into \textit{tree diagrams} by connecting all leaves of a same colour by an abstract tree. We define two $\ZZ$-valued complexities associated with a leaf diagram $L$:
\begin{itemize}
\item The \textit{Gauss degree} $\deg(L)$, which is the total number of leaves minus the number of colours (polygons) in $L$.
\item The \textit{codimension} $\i(L)$, or \textit{cohomological degree}, which is the total number of leaves minus \textit{twice} the number of colours in $L$.\end{itemize}
The term \enquote{Gauss degree} comes from the theory of chord diagrams, where it denotes the number of chords. For instance, a leaf diagram with $d$ polygons, all of which have cardinality $2$, has Gauss degree $d$ and
codimension $0$.

Leaf diagrams are regarded up to orientation preserving homeomorphisms of the real line $\SS^1\setminus \left\lbrace \infty\right\rbrace $. The $\ZZ$-module freely generated by equivalence classes of leaf diagrams of degree $d$ and codimension $i$ is denoted by $\mathcal{L}_d^i$. Note that $\mathcal{L}_d^i$ is always finitely generated, and is trivial whenever $i$ is greater than $d-1$.

\begin{remark}\label{rem:isolated}
Special attention should be paid to polygons with only one leaf. Such a polygon contributes $-1$ to the codimension, and has no effect on the Gauss degree. They are actually the only reason why the cohomological degree is not bounded and $\NN$-valued. In our main application for this theory, such polygons are naturally excluded, and the spaces of diagrams with fixed Gauss degree are finitely generated. However, it is harmless to allow them in the prototypical cochain complex, and there may be a theoretical interest to study their meaning and the relations between the main and \enquote{reduced} cohomology theories. 
\end{remark}

\subsection{The $\varepsilon$ signs and prototypical complex}

Let $L$ be a leaf diagram. An \textit{edge} of $L$ is a closed connected part of the circle that lies between two neighboring leaves of $L$ -- in particular, an edge cannot contain a leaf in its interior, and it cannot contain $\infty$. An edge is called \textit{admissible} if its two boundary points have different colours. From such an edge, we construct a new leaf diagram $L_e$ in the following way: the polygons of $L_e$ are the polygons of $L$, except for the two that have a leaf at the boundary of $e$: those two are merged into a single polygon in $L_e$, and one of the two boundary points of $e$ is removed from it (which one exactly has no effect on the resulting diagram up to positive homeomorphism of $\RR$).

One easily checks the following relations:
\begin{eqnarray*}
\deg(L_e)&=&\deg(L),\hspace*{1cm} \text{and}\\
\i(L_e)&=&\i(L)+1.
\end{eqnarray*}

Consider the linear maps $\mathcal{L}_d^{i-1} \rightarrow \mathcal{L}_d^i$ defined on each generator by the formula \begin{eqnarray*}
\,\,\,\, L &\mapsto & \sum_{ e \text{ admissible}} L_e. \end{eqnarray*}

It is easy to see that using $\ZZ_2$ coefficients, these maps turn the collection of spaces $\mathcal{L}_d^i$ into a graded cochain complex. Our goal is to define signs to lift this complex over $\ZZ$.

\subsubsection*{The global sign}

Let $P$ be an odd polygon in a leaf diagram $L$. We define the \textit{odd index} of $P$ as the parity of the number of odd polygons in $L$ that are greater than $P$. Using the convention that a boolean expression has value $-1$ when it is true and $1$ otherwise, this can be written:
$$\text{Odd}(P,L)\stackrel{\text{def}}{=} \prod_{P\p\text{ odd}}\left(P<P\p \right).$$
We extend this definition to all polygons by setting $\text{Odd}(P)=1$ whenever $P$ is even.

Let $e$ be an admissible edge in $L$, bounded by the leaves $v$ and $w$ lying respectively in the polygons $P_v$ and $P_w$. Also, denote by $P_{vw}$ the polygon of $L_e$ that results from the merging of $P_v$ and $P_w$.

We define the \textit{global sign} associated with the edge $e$ in $L$ by 
$$\sigma_{\text{glo}}(e,L) \stackrel{\text{def}}{=} \text{Odd}(P_v,L)\cdot\text{Odd}(P_w,L)\cdot\text{Odd}(P_{vw},L_e).$$
This will be the only contribution to the signs in the coboundary maps that depends on polygons located far from $e$. 

\begin{remark}\label{rem:minus} When $P_v$ and $P_w$ are both odd, both booleans $(P_v<P_w)$ and $(P_w<P_v)$ appear in $\sigma_{\text{glo}}$, which results in a minus sign. \end{remark}

\subsubsection*{The local sign}

From now on, for the sake of lightness, we omit to mention that every sign depends on $L$, since other diagrams like $L_e$ will not contribute any more.

If $x$ is a leaf in $L$, we denote by $P_x$ the polygon that contains it. Define the \textit{evenisation} of $P_x$ with respect to $x$ as $$P_x^{(x)}= \begin{array}{ll}
 P_x+x &\text{ if $P_x$ is odd,}\\
P_x & \text{ if $P_x$ is even.}\end{array}
$$ As a set, $P_x+x$ corresponds to $P_x\setminus \left\lbrace x\right\rbrace$, so that the polygon $P_x^{(x)}$ is always even. 

As previously, let $e$ be an admissible edge in $L$, bounded by the leaves $v$ and $w$. Recall the convention that a boolean expression takes value $-1$ when it is true and $1$ otherwise. 

We define:

\begin{align*}
\lk(e)&=\begin{array}{l}\lk(P_v^{(v)} ,P_w^{(w)})\end{array} \tag{\textit{Linking number} of $e$}\\
&\\
\E(P_v,e)&= 
\begin{array}{ll}
\lk(v+\infty,P_w) &\hspace*{0.6cm} \text{ if $P_v$ is even,}\\ 1 &\hspace*{0.6cm}\text{ otherwise.}
\end{array}
\tag{\textit{Even index} of $P_v$ wrt $e$}\\
&\\
\psi(e)&= \begin{array}{ll}
(v<w) (P_v<P_w) & \text{ if both $P_v$ and $P_w$ are odd.}\\ 
1 &\text{ otherwise.}
\end{array}\tag{\textit{Odd consistency} of $e$}
\end{align*}

The \textit{local sign} associated with the edge $e$ is
$$\sigma_\text{loc}(e,L)= \lk(e) \psi(e) \E(P_v,e) \E(P_w,e).$$

Finally, we set
$$
\varepsilon_L(e) = \sigma_\text{loc}(e,L) \sigma_\text{glo}(e,L),$$ and $$
\delta_d^i (L) = \sum_{ e \text{ admissible}}
\varepsilon_L(e) \cdot L_e,$$
and extend this formula into a linear map $ \delta_d^i: \mathcal{L}_d^{i-1} \rightarrow \mathcal{L}_d^i$.

\begin{theorem}\label{thm:maincpx}
For each $d\geq 1$, the collection of spaces $\mathcal{L}_d^*$ and maps $\delta_d^*$ forms a cochain complex of $\ZZ$-modules.
\end{theorem}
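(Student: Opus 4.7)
The goal is to establish $\delta_d^{i+1}\circ \delta_d^i=0$ for every generator $L\in \mathcal{L}_d^{i-1}$. Unfolding definitions,
\begin{equation*}
\delta^2 L \;=\; \sum_{e,f}\varepsilon_L(e)\,\varepsilon_{L_e}(f)\,(L_e)_f,
\end{equation*}
the sum running over admissible edges $e$ of $L$ and admissible edges $f$ of $L_e$. The paper has already noted that this expression vanishes modulo $2$, so the pairs $(e,f)$ yielding each target diagram $L''$ appear in an even number. The task reduces to matching them in pairs whose signed contributions cancel.

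I would classify these pairs by the effect on polygons. In \emph{Type (I)}, $e$ and $f$ merge two disjoint pairs of polygons of $L$; then $f$ (viewed via the natural identification of the underlying circle) is already admissible in $L$, and the swap $(e,f)\leftrightarrow (f,e)$ provides the canonical pairing. In \emph{Type (II)}, $e$ and $f$ together merge three polygons $A$, $B$, $C$ of $L$ into one, and the pairings come from the choice of which pair among $\{A,B\}$, $\{A,C\}$, $\{B,C\}$ is contracted first, with the extra subtlety that some edges used at the second step exist as admissible edges of $L_e$ only after $e$ has fused two adjacent edges of $L$.

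For Type (I), I would argue that $\varepsilon_L(e)\varepsilon_{L_e}(f)=-\varepsilon_L(f)\varepsilon_{L_f}(e)$ by factor-by-factor comparison. Since $e$ and $f$ involve four disjoint polygons, the local sign $\sigma_{\text{loc}}(e,L)$ depends only on leaves and polygons near $e$, and hence equals $\sigma_{\text{loc}}(e,L_f)$; similarly for $f$. The only discrepancy lies in the global factors $\sigma_{\text{glo}}$, whose values change when the odd polygons of $L$ are rearranged by the other merger. A straightforward case split on the parities of the four polygons should show that this change contributes exactly one surplus boolean, yielding the desired minus sign — essentially the same skew-symmetry phenomenon observed in Remark~\ref{rem:minus}.

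Type (II) is where I expect the main difficulty. Here the polygons at the endpoints of the second edge are themselves altered by the first contraction, so $\lk$, $\psi$ and $\E$ differ substantially on the two sides of each pairing, and the \emph{fused-edge} degeneracy means that the correspondence between the admissible edges of $L$ and those of $L_e$ is not a bijection. I would split cases on the parities of $A$, $B$, $C$ and on whether each second edge is inherited from $L$ or arises through a fusion. The cancellation should reduce in each case to algebraic identities describing how linking numbers and evenisations behave under merging — most notably a triangle-type relation for $\lk$ among the three polygons. Tracking these identities carefully case by case is the calculation-intensive core of the proof.
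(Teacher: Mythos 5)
Your outline coincides with the paper's own strategy: pair the two orders $(e,f)$ and $(f,e)$ of a \enquote{bi-admissible} couple, split according to whether the four endpoints carry four or three distinct colours, and in each case analyse by parities. The paper makes explicit the structural fact you use implicitly in Type~(I), namely that $e'$ is admissible in $L_e$ if and only if $e'$ is admissible in $L$ and $e$ is admissible in $L_{e'}$; that equivalence is what legitimises the edge-swap pairing in all cases, including your Type~(II), and is cleaner than framing the pairing as a \enquote{choice of which pair among $\{A,B\}$, $\{A,C\}$, $\{B,C\}$ is contracted first}. Your remark about fused edges is also slightly off: a fused edge of $L_e$ can only be admissible when the original adjacent edge of $L$ already was, so contraction never creates new admissible edges — the bookkeeping issue is that the fused edge has different endpoints, not that it becomes newly admissible. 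For Type~(I), your observation that $\sigma_{\text{loc}}$ cancels and only $\sigma_{\text{glo}}$ survives is exactly what the paper does.

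The genuine gap is that both case analyses are left undone. You write that Type~(I) \enquote{should show} a surplus boolean and that Type~(II) is \enquote{the calculation-intensive core of the proof}, but you perform neither. The paper discharges Type~(I) via a six-row parity table whose rows each contain an odd number of booleans, all reversed under the swap; it discharges Type~(II) — which is genuinely intricate, since $\lk$, $\E$, $\psi$ and $\sigma_{\text{glo}}$ all change under the first contraction — via a second six-case table that depends crucially on bilinearity of $\lk$ and the identity $\lk(a+\infty,b)=(a<b)$. Without actually verifying these cancellations, the proposal is a plan rather than a proof: the conclusion that the total sign is $-1$ in every configuration is precisely what needs to be checked, and it is not self-evident from the structure of the signs alone (for instance, one row in the three-colour table produces a bare $-1$ from $\sigma_{\text{glo}}$ alone, while others require nontrivial interaction between $\psi$ and $\sigma_{\text{glo}}$).
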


\begin{proof}
Let $e$ and $e\p$ be two edges in a leaf diagram $L$, such that $e$ is admissible. Then $e\p$ is admissible in $L_e$ if and only if $e\p$ is admissible in $L$ and $e$ is admissible in $L_{e\p}$. We call such a couple \textit{bi-admissible}. To prove the theorem, it is enough to show that for any bi-admissible couple, the contribution of $e$ and $e\p$ in the computation of $\delta^2 L$ is 0.
In other words, that the product $\varepsilon_L(e) \varepsilon_L(e\p) \varepsilon_{L_{e\p}}(e) \varepsilon_{L_e}(e\p)$ is always equal to $-1$.

If $e$ and $e\p$ are bounded respectively by $v$, $w$ and  $v\p$, $w\p$, then $(e,e\p)$ is bi-admissible if and only if $e$ and $e\p$ are admissible and the leaves $v, w, v\p$ and $w\p$ represent at least $3$ different colours. We split the proof into two parts, accordingly. 

First, assume that all leaves have pairwise different colours. In this case, every contribution from $\sigma_\text{loc}$ appears twice and cancels out. So do the contributions of $\sigma_\text{glo}$ involving other polygons than those neighboring $e$ and $e\p$. The remaining contributions of $\sigma_\text{glo}$ are summarised in Table~\ref{tab:global}; $0$ stands for \enquote{even}, $1$ for \enquote{odd}.
We show only the contribution to $\varepsilon_L(e)\varepsilon_{L_{e\p}}(e) $: the contribution to $\varepsilon_L(e\p)\varepsilon_{L_e}(e\p) $ contains exactly the opposite boolean expressions. So the point is that in each row, there is an odd number of booleans.

\begin{table}[h!]
\centering
\begin{tabular}{|c|c|c|c|c|}
\hline
\multicolumn{4}{|c|}{Parities $\stackrel{\phantom{.}}{\text{of}}$  the polygons} & \multirow{2}{*}{Total contribution of $\sigma_\text{glo}$ to $\varepsilon_L(e)\varepsilon_{L_{e\p}}(e) $}\\
 \cline{1-4} $P_v$  & $\stackrel{\phantom{.}}{P}_w$  & $P_{v\p}$ & $  P_{w\p}$&\\

\hline

$\hspace*{0.2cm}\stackrel{\phantom{.}}{0}\hspace*{0.2cm}$ &$\hspace*{0.2cm}0\hspace*{0.2cm}$ &$\hspace*{0.2cm}0\hspace*{0.2cm}$ &$\hspace*{0.2cm}0\hspace*{0.2cm}$ &$(P_{vw}<P_{v\p w\p}) $\\ \hline
$\stackrel{\phantom{.}}{0}$ &$0$ &$0$ &$1$ &$(P_{vw}<P_{w\p})$\\ \hline
$\stackrel{\phantom{.}}{0}$ &$0$ &$1$ &$1$ &$(P_{vw}<P_{v\p})(P_{vw}<P_{w\p})(P_{vw}<P_{v\p w\p})$\\ \hline
$\stackrel{\phantom{.}}{0}$ &$1$ &$0$ &$1$ &$(P_{w}<P_{w\p})$\\ \hline
$\stackrel{\phantom{.}}{0}$ &$1$ &$1$ &$1$ &$(P_{w}<P_{v\p})(P_{w}<P_{w\p})(P_{w}<P_{v\p w\p}) $\\ \hline
$\stackrel{\phantom{.}} {\multirow{3}{*}{1}}$ &$\multirow{3}{*}{1}$ &$\multirow{3}{*}{1}$ &$\multirow{3}{*}{1}$ &$(P_{v}<P_{v\p})(P_{w}<P_{v\p})(P_{vw}<P_{v\p})$\\
&&&&$(P_{v}<P_{w\p})(P_{w}<P_{w\p}) (P_{vw}<P_{w\p})$\\ &&&&$(P_{v}<P_{v\p w\p})(P_{w}<P_{v\p w\p}) (P_{vw}<P_{v\p w\p})$\\ \hline

\end{tabular}
\caption{Computation of $\delta^2$ when $\sharp \left\lbrace P_v, P_w, P_{v\p}, P_{w\p} \right\rbrace=4 $. By symmetry, there are only six cases to consider. Note that the minus sign due to $P_{v}$ and $P_{w}$ being both odd in the last line (Remark~\ref{rem:minus}) appears twice and cancels out.}\label{tab:global} \end{table}

We now assume that $v, w, v\p$ and $w\p$ represent $3$ colours, and without loss of generality that $w$ and  $w\p$ share the same one. We need not discuss the special case when $w$ is actually equal to $w\p$, since the following computations do not depend on that.
Table~\ref{tab:local} details the contribution of each factor to the product $\varepsilon_L(e) \varepsilon_L(e\p) \varepsilon_{L_{e\p}}(e) \varepsilon_{L_e}(e\p)$. The proof that the product of all contributions is always $-1$ is straightforward, using the bilinearity of $\lk$ and the formula
$$\forall a, b\in \RR,\quad \lk(a+\infty, b)=(a<b).$$

\begin{table}[h!]
\centering
\hspace*{-1cm}
\begin{tabular}{|c|c|c|c|c|c|c|}
\hline
\multicolumn{3}{|c|}{ $\stackrel{\phantom{.}}{\text{Parities}}$} & \multicolumn{4}{c|}{Contributions}\\
 \hline $P_v$  & $P_w$  & $P_{v\p}$ & $\stackrel{\phantom{.}}{\phantom{E}}\lk\stackrel{\phantom{.}}{\phantom{E}}$ & $\E$ & $\psi$ & $\sigma_{\text{glo}}$ \\
\hline

\multirow{2}{*}{${0}$} &\multirow{2}{*}{$0$} &\multirow{2}{*}{$0$} &\multirow{2}{*}{$\lk(P_v+P_{v\p},w+w\p)$} &$ \lk(P_v+P_{v\p}\!\! \stackrel{\phantom{|}}{,}w+w\p) $&\multirow{2}{*}{$1$}&\multirow{2}{*}{$1$}\\
&&&
&$\lk(v+\infty,w\p) \lk(v\p+\infty,w)$&&\\ \hline 


\multirow{2}{*}{${0}$} &\multirow{2}{*}{$0$} &\multirow{2}{*}{$1$} &\multirow{2}{*}{$\lk(P_{v\p}+v\p,w+w\p)$} &\multirow{2}{*}{$\lk(P_{v\p},w+w\p) \lk(w+\infty, w\p)$}&\multirow{2}{*}{$(v\p<w\p)(P_{v\p}<P_{vw})$}& \multirow{2}{*}{$(P_{v\p}<P_{vw})$}\\
&&&
&&&\\ \hline 


\multirow{2}{*}{$1$} &\multirow{2}{*}{$0$} &\multirow{2}{*}{$1$} &\multirow{2}{*}{$1$}&\multirow{2}{*}{$1$}&\multirow{2}{*}{$1$}&\multirow{2}{*}{$-1$}\\
&&&
&&&\\ \hline


\multirow{2}{*}{$0$} &\multirow{2}{*}{$1$} &\multirow{2}{*}{$0$} &\multirow{2}{*}{$\lk(P_v+P_{v\p},w+w\p)$} &$\lk(P_v+P_{v\p}\!\stackrel{\phantom{|}}{,}w+w\p)$&\multirow{2}{*}{$\stackrel{\phantom{.}}{1}$}&\multirow{2}{*}{$\stackrel{\phantom{.}}{1}$}\\
&&&
&$\lk(v+\infty,w\p) \lk(v\p+\infty,w)$&&\\ \hline 


\multirow{2}{*}{${0}$} &\multirow{2}{*}{$1$} &\multirow{2}{*}{$1$} &\multirow{2}{*}{$\lk(P_{v\p}+v\p,w+w\p)$}& \multirow{2}{*}{$\lk(P_{v\p},w+w\p)\lk(v+\infty, v\p)$}&\multirow{2}{*}{$(v\p<w\p)(P_{v\p}<P_{w})$}&\multirow{2}{*}{$(P_{v\p}<P_{w})$}\\
&&&
&&&\\ \hline 


\multirow{2}{*}{$1$} &\multirow{2}{*}{$1$} &\multirow{2}{*}{$1$} &\multirow{2}{*}{$1$}&\multirow{2}{*}{$1$}&\multicolumn{2}{c|}{\multirow{2}{*}{$\pm
\begin{array}{c}
 \stackrel{\phantom{|}}{\phantom{,}}(P_{v}<P_{w})(P_{v\p}<P_{w})\stackrel{\phantom{|}}{\phantom{,}}\\ (P_{v}<P_{v\p w})(P_{v\p}<P_{v w})
\end{array}$}}\\
&&&
&&\multicolumn{2}{c|}{}\\ \hline 

\end{tabular}
\caption{Contribution of each factor after obvious simplifications, in the case $P_w=P_{w\p}$. By symmetry, there are only six cases to consider. In the sixth line, $\psi$ and $\sigma_{\text{glo}}$  have the same contribution up to sign, and the sign is $+$ for $\psi$ and $-$ for $\sigma_{\text{glo}}$.} \label{tab:local} 
\end{table}

\end{proof}

\section{Simple singularities in the space of knot diagrams}\label{sec:strata}

\subsection{Germs and the associated strata}

By the \textit{space of long knots} $\K$ we refer to the (arbitrarily high, but finite)-dimensional affine approximation of the space of all smooth maps $\RR\rightarrow \RR^3$ with prescribed asymptotical behaviour, as defined in \cite{Vassiliev1990}. The \textit{discriminant} $\Sigma$ is the subset of all maps in $\K$ that are not embeddings.
A projection $p:\RR^3\rightarrow \RR^2$ endows $\K\setminus \Sigma$ with a stratification, whose strata are defined by certain semi-algebraic varieties in multijet spaces (see Example~\ref{ex:Reid}, and see \cite{David, Wall, Fiedler1parameter} and references therein for an introduction to stratified spaces and the simplest examples used in knot theory).
Those strata can be represented by Gauss diagrams with additional information of geometrical nature, i.e. involving derivatives (see \cite{Vassiliev}). We will call such a stratum \textit{simple} if the only geometric data are the writhes of the crossings -- and  \textit{geometric} otherwise.

\begin{definition}\label{def:germs}
An \textit{abstract germ} is the datum of a finite number of complete oriented graphs, together with an embedding of the disjoint union of their vertices into $\RR=\SS^1\setminus \left\lbrace \infty \right\rbrace $, such that
\begin{enumerate}
\item each graph has at least two vertices,

\item no graph has oriented cycles,

\item each edge of each graph is decorated with a sign $+$ or $-$.
\end{enumerate}

An abstract germ $\gamma$ has an underlying leaf diagram $L(\gamma)$, from which it inherits the terminology of polygons, leaves, colours, edges, as well as the Gauss and cohomological degrees $\deg$ and $\i$.
\textit{The edges of the graphs in $\gamma$ are called (signed) arrows}.
The $\ZZ$-module freely generated by germs with cohomological degree $i$ is denoted by $\mathcal{G}_i$ -- because we will essentially think of meridians, for which $i$ is the dimension.

Condition $2$ above implies that a germ induces a total order on each of its polygons, and a partial order on the set of all of its leaves, denoted by $<_\gamma$. 
A knot is said to \textit{respect} $\gamma$, or called a \textit{$\gamma$-knot}, if it maps any two leaves with the same colour to a classical crossing with over/under datum given by the order $<_\gamma$, and writhe given by the sign of the arrow between those leaves. These conditions may be inconsistent, so that no knot can respect $\gamma$; otherwise, $\gamma$ is called a \textit{topological germ}, or more simply a \textit{germ}. In that case the diagram of a generic $\gamma$-knot is uniquely determined near each imposed crossing up to local diagram isotopy.
Out of the $2^{\binom{n}{2}}$ ways to put signs on a complete graph (consistently oriented) with $n$ leaves, exactly $2^{n-1}(n-1)!$ are topological.

\end{definition}

If the leaves of $\gamma$ are fixed, the set of all $\gamma$-knots in $\K$ is an affine subspace of codimension $2\deg (\gamma)$, because there are $(\sharp P-1)$ affine equations for each polygon $P$ (which are independent if $\dim \K$ is large enough), and because the writhe conditions are open, hence $0$-codimensional. If the leaves are set free, i.e. the germ is regarded up to positive homeomorphism of the real line, then the codimension drops to $2\deg (\gamma) - \text{(number of leaves)}$, which is equal to $\i (\gamma)$.

\begin{definition}
The $\i (\gamma)$-codimensional subspace of all knots in $\K$ that respect $\gamma$ up to positive homeomorphism of the real line is denoted by  $\K_\gamma$ and called the \textit{simple stratum} associated with $\gamma$.
\end{definition}

In Subsection~\ref{sec:meridian}, we will show that a germ $\gamma$ defines canonically a co-orientation of $\K_\gamma$ (that is, an orientation of its meridian ball). That is the reason for calling it a \textit{germ}.

\begin{example}\label{ex:Reid}
The strata of codimension $1$ are described by the classical Reidemeister moves. R-I and R-II strata are geometric, and R-III is simple.
Indeed, choose a basis $(e_1, e_2, e_3)$ of $\RR^3$, such that $e_3$ is the axis of the projection $p$. This splits a knot parametrisation $f:\RR\rightarrow \RR^3$ into three coordinate functions $f_{1,2,3}$. Reidemeister strata are then defined by writhe data together with the conditions (for example):

$$\begin{array}{cclcl}
\text{\emph{R-I}} &  \exists\, x, &\begin{array}{l}f^\prime_1(x) = f^\prime_2(x) = 0\end{array}.&&\\ &&&&\\
\text{\emph{R-II}} &  \exists\, x< y,& 
\begin{array}{l}
f_1(x)=f_1(y)\\ f_2(x)=f_2(y)\\ f_3(x)<f_3(y)\end{array}
 &\text{ \emph{and} }&
\det(\begin{array}{cc}f^\prime_1(x) & f^\prime_1(y)\\
f^\prime_2(x) & f^\prime_2(y) \end{array})=0.\\ &&&&\\
\text{\emph{R-III}} &  \exists\, y< x< z,& 
\begin{array}{l}
f_1(x)=f_1(y)=f_1(z)\\ f_2(x)=f_2(y)=f_2(z)\\ f_3(x)<f_3(y)<f_3(z)\end{array}
.&&
\end{array}$$
\end{example}

Note that the conditions do not depend on the choice of a basis for the projection plane, $\left( e_1, e_2\right)$; this is a general observation, the stratification depends only on $p$.
Also, this stratification should not be confused with that of $\Sigma$ used by Vassiliev \cite{Vassiliev1990} to define finite-type cohomology classes. That one will not be used in the present paper.

\begin{remark}
When a germ is regarded up to homeomorphism, it may happen that a knot respects it in several different ways. Note however that a generic $\gamma$-knot cannot have more singularities than imposed by $\gamma$, so that the only source of multiplicity lies in two-leaved polygons, which give $0$-codimensional constraints. Rather than the strata $\K_\gamma$, one may consider  \textit{simplicial chains}, whose local weight near a given $\gamma$-knot is equal to the number of ways that knot respects $\gamma$ -- this is the implicit choice in Vassiliev's calculus \cite{Vassiliev}. Here, algebraic intersection with such chains will be modelled by means of the map $\I$ which is defined in Subsection~\ref{sec:I}.
\end{remark}

\subsection{Boundary of simple strata}

The boundary of a stratum $\K_\gamma$ is defined by the generic ways for its constraints to degenerate. There are essentially six basic ways, from which all others can be built. They can be interpreted by thinking of a generic $\gamma$-knot as a knot diagram some of whose crossings, \textit{including all multiple crossings}, are coloured in red.
\vspace{0.2cm}

\textbf{Type $\Sigma$.} Two leaves of $\gamma$ that are consecutive with respect to the order $<_\gamma$ tend to be mapped to the same point in $\RR^3$. The corresponding piece of boundary lies in $\Sigma$, so it is harmless for our purposes (understanding the cohomology of $\K\setminus \Sigma$, which is the relative homology of $\left( \K, \Sigma\right) $).
\vspace{0.2cm}

\textbf{Type $1$}. One edge of $\gamma$ whose boundary points have the same colour collapses into a point $x$, accompanied with the condition $f^\prime_1(x) = f^\prime_2(x) = 0$.
\vspace{0.2cm}

\textbf{Type \du}. Two branches of a red crossing tend to have the same direction in the knot diagram; from the point of view of $\gamma$, it results in a writhe not being well-defined any more, and replaced with either a condition of positive, or negative, collinearity of derivatives.
\vspace{0.2cm}

\textbf{Type \dd}. 
Two edges of $\gamma$ that bound a bigon in the knot diagram collapse simultaneously. This produces the same geometric condition as in Type \du.
\vspace*{0.2cm}

\textbf{Type \td}. One edge whose boundary points have distinct colours collapses.
\vspace{0.2cm}

\textbf{Type \tt}. Three edges that bound a triangle in the knot diagram collapse simultaneously.
\vspace{0.2cm}

Types $1$ to \tt~correspond to generalised Reidemeister moves, in that the crossings are allowed to be multiple. They are sorted according to how many red crossings they involve.

Besides these basic types, it can happen that types \dd, \td~and \tt~are accompanied with the simultaneous collapsing of an arbitrarily large number of triangles of type \tt. Indeed,  in all of these cases, one can see on the knot diagram that a number of crossings are locally present although they may not be imposed by the germ (red). Now these crossings may also actually be present in the germ, in which case they can either be regarded as far (which yields a basic type as above) or close, in which case they participate in the collapsing. Then, these extra crossings may be themselves multiple crossings from the beginning, and the phenomenon repeat itself.

We are now ready to define precisely which kind of degeneracies will be of interest in this paper.
\begin{definition}\label{def:ext}
We call \emph{Type $0$} a degeneracy of basic type \td~together with finitely many \emph{non-multiple} extra crossings as above -- in other words, at most two polygons with more than two leaves can be involved in the collapsing. If the two polygons of the underlying type \td~degeneracy have respectively $m$ and $n$ leaves, then there may be at most $(m-1)(n-1)$ extra arrows.
Degeneracies of basic type \td~with extra \textit{multiple} crossings are considered to fall down into Type \tt.
\end{definition}


\subsubsection*{Reidemeister farness}

We now define a class of germs that will allow us to avoid bad geometric strata and the above Type \tt~frenzy.

\begin{definition}\label{def:far}
Let $\gamma$ be a germ.
We say that two leaves in $\gamma$ are \textit{neighbours} if they are the two boundary points of an edge. Then $\gamma$ is called:
\begin{enumerate}
\item \textit{RI-close} if it contains an arrow $(v,w)$ such that $v$ and $w$ are neighbours.
\item \textit{RII-close} if it contains four distinct leaves $v, w, x, y$ such that:
\begin{itemize}
\item $v$ and $w$ are neighbours, and so are $x$ and $y$;
\item $v$ and $w$ have distinct colours;
\item $v<_\gamma x$ and $w<_\gamma y$. 
\end{itemize}
\item \textit{RIII-close} if it contains six distinct leaves $v, w, x, y, z, t$ such that:
\begin{itemize}
\item $\left\lbrace v,w\right\rbrace$, $\left\lbrace x,y\right\rbrace$, $\left\lbrace z,t\right\rbrace$ are couples of neighbours;
\item $v$, $w$ and $y$ have pairwise distinct colours.
\item $v<_\gamma x$, $y<_\gamma z$ and $w<_\gamma t$. 
\end{itemize}

\end{enumerate}
We define \textit{R-farness} of germs, and therefore of simple strata, as the negation of all of these properties. 
In other words, $\gamma$ is R-far if no generic $\gamma$-knot can be subject to a generalised Reidemeister move involving only red crossings, that is, Basic types $1$, \dd~and \tt~cannot occur.
\end{definition}
%
%

\subsection{Meridian systems and the $\partial_i$ map}\label{sec:meridian}

Roughly speaking, our goal is to define cohomology classes in the space of knots as intersection forms with R-far simple strata. This requires to understand in which meridian spheres these strata occur. By the previous discussion we mainly need to consider the meridians of simple strata. The geometric strata resulting from \du~degeneracies will later prove to be completely harmless (see Lemma~\ref{lem:du}).

Let $f$ be a knot respecting an $i$-germ $\gamma$, and $\D^i_f$ a piecewise linear (PL) $i$-disc about $f$ in $\K$, transverse to the stratification. Then the boundary of $\D^i_f$ intersects finitely many $(i-1)$-strata, at points $f_1, \ldots, f_p$, and can be covered with PL discs $\D^{i-1}_{f_k}$ with pairwise disjoint interiors. Since $\gamma$ is simple, every meridian stratum $\gamma_k$ is necessarily simple, and the degeneracy $\gamma_k \rightsquigarrow \gamma$ is either of Type $0$~(Definition~\ref{def:ext}) or \tt. \begin{definition}[reduced meridian system]
The cellular boundary map (over $\ZZ$) associated with the above decomposition of $\partial \D^i_f$ depends only on $\gamma$. It is called the \textit{meridian system} of $\gamma$. The \textit{reduced meridian system} of $\gamma$ is the induced map with target restricted to strata coming from Type $0$~degeneracies. We denote it shortly by $$\partial_\gamma: C_\gamma \rightarrow  \bigoplus_0 C_{\gamma_k}.$$
\end{definition}
When $i=0$, $\D^0_f$ consists of a single point and has a canonical orientation, i.e. there is a canonical generator of $C_\gamma\cong \ZZ$, which we denote by $1_\gamma$.

We now show that the signs $\varepsilon$ used to construct the cochain complex from Section~\ref{sec:leaf} provide a combinatorial realisation of this boundary map, and a preferred generator for each module $C_\gamma$.

\begin{definition}[$k$-splittings]
Let $\gamma$ be a germ and $\Gamma$ a graph of $\gamma$ with $n$ leaves, $n\geq 3$. A \textit{splitting of $\gamma$ along $\Gamma$} is a germ $\gamma_s$ together with the datum of a Type $0$~degeneracy $\gamma_s \rightsquigarrow \gamma$ resulting in the creation of the graph $\Gamma$. It has to involve two graphs $\Gamma_1$ and $\Gamma_2$ with respectively $k$ and $n+1-k$ leaves (we assume $k\leq n+1-k$), together with $(k-1)(n-k)$ two-leaved graphs.
If $k\geq3$, $\gamma_s$ has a favourite edge $e(s)$ which is the only edge bounded by $\Gamma_1$ and $\Gamma_2$ that gets shrunk in the degeneracy.

When $k=2$, the choice of $\Gamma_1$ (and also $\Gamma_2$ if $n=3$) and therefore $e(s)$, is not unique. However, $k$ is uniquely defined and we have a notion of \textit{$k$-splitting}.
\end{definition}

\begin{definition}\label{def:cons}
Let $\Gamma$ be a graph with two leaves in a germ $\gamma$. We define the $consistency$ $\chi(\Gamma)$ to be $+1$ if the order defined by $\RR$ and that defined by $<_\gamma$ agree on $\Gamma$, and $-1$ otherwise. We let $\chi w(\Gamma)$ denote the product of $\chi(\Gamma)$ with the sign of the arrow bounded by $\Gamma$ in $\gamma$.
\textit{The maps $\chi$ and $w$ are set to $+1$ for graphs with more than two leaves.}
\end{definition}

\begin{lemma} \label{lem:epsi}
Let $\gamma_s$ be a ($2$-)splitting of $\gamma$, and let $L(s)$ be the underlying leaf diagram to $\gamma_s$. Then the sign
$$\chi w(\Gamma_1)\chi w(\Gamma_2)\varepsilon_{L(s)}(e(s))$$
does not depend on the choice of $\Gamma_1$ and $\Gamma_2$.
\end{lemma}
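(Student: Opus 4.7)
The plan is to reduce the invariance to a single elementary swap of labels and then carry out the sign computation in closed form. The underlying leaf diagram $L(s)$ is fixed throughout; what varies is which $2$-leaved polygon of $\gamma_s$ plays the role of $\Gamma_1$ (and, when $n=3$, also which plays $\Gamma_2$), together with the corresponding edge $e(s)$. When $n\geq 4$, $\Gamma_2$ is forced to be the unique $(n-1)$-leaved polygon of $\gamma_s$ and $\chi w(\Gamma_2)=1$; only $\Gamma_1$ is then at stake. When $n=3$, all three $2$-leaved polygons of $\gamma_s$ can be permuted between the roles $\Gamma_1$, $\Gamma_2$, extra. In either case, any two valid designations are related by a chain of elementary swaps exchanging the label between two $2$-leaved polygons, so it suffices to verify the invariance under a single such swap.

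First I would fix an elementary swap that interchanges $\Gamma_1=C$ with some other $2$-leaved polygon $C'$, leaving every other polygon of $L(s)$ unchanged; this turns the contracted edge $e(s)$ into a different edge $e'(s)$. The ratio of the two candidate signs factors into three kinds of elementary contributions: first, $\chi w(C)/\chi w(C')$, which involves the writhes and order-consistencies of the two crossings; second, $\sigma_{\text{loc}}(e(s),L(s))/\sigma_{\text{loc}}(e'(s),L(s))$, in which only the $\lk$ term and the $\E$ terms feel the change of boundary leaves and of evenised polygons; and third, $\sigma_{\text{glo}}(e(s),L(s))/\sigma_{\text{glo}}(e'(s),L(s))$, which records only the parity contributions of $C$, $C'$, $\Gamma_2$ and the merged polygons. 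Using the bilinearity of $\lk$ and the identity $\lk(a+\infty,b)=(a<b)$, each of these contributions reduces to a boolean expression about the mutual order on $\SS^1$ of the leaves of $C, C'$ and of the leaves of $\Gamma_2$ adjacent to them. The writhes $w(C)$ and $w(C')$ cancel against those that the arrows of $\Gamma$ originally assign in $\gamma$ to the corresponding crossings, while the $\chi$ factors absorb the remaining booleans coming from the $\E$ terms and from $\sigma_{\text{glo}}$.

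The hard part will be the case enumeration: the verification must be split by the parity of $\Gamma_2$ (equivalently, by the parity of $n$), by the position of $\infty$ relative to the leaves involved in the swap, and, when $n=3$, by which of the three $2$-leaved polygons plays which role. In each subcase the computation is mechanical, and parallels, on a smaller scale, the tables in the proof of Theorem~\ref{thm:maincpx}; in every subcase the ratio simplifies to $+1$, giving the claimed invariance.
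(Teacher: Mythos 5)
Your plan takes a genuinely different route from the paper. The paper first computes the sign explicitly for a single model germ $\gamma^+(2,n-1)$ (all writhes $+$), obtaining $(-1)^n$ for every choice of $\Gamma_1$, and then lists seven local \enquote{moves} (adding a bystander graph, crossing changes in $\Gamma_2$ or at a $\Gamma_1$-candidate, reversing a branch, sliding to the other side, changing the virtual connection order, moving $\infty$) that connect the model case to an arbitrary $2$-splitting of an arbitrary germ, checking that each move acts uniformly on all $\Gamma_1$-candidates. Your scheme instead fixes $\gamma_s$ and compares two candidates directly. That reduction is sound in outline, and you are right that by the symmetry of the formula in $\{\Gamma_1,\Gamma_2\}$ the $n=3$ case needs no separate treatment.

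However, there is a genuine gap in the sketch, and it concerns precisely the germ-level data. The sign $\varepsilon_{L(s)}(e(s))$ and the even/odd bookkeeping ($\lk$, $\E$, $\sigma_{\mathrm{glo}}$) are functions of the leaf diagram alone, so your case enumeration by parity of $n$, position of $\infty$, and role assignments for $n=3$ only fixes leaf-diagram data. But the factor $\chi w(\Gamma_1)$ depends on the arrow orientation \emph{and} on the writhe assigned by $\gamma_s$, and a priori the ratio $w(C)/w(C')$ is free of any leaf-diagram constraint. Your proposal says that \enquote{the writhes $w(C)$ and $w(C')$ cancel against those that the arrows of $\Gamma$ originally assign in $\gamma$}, but nothing in the quantity $\chi w(\Gamma_1)\chi w(\Gamma_2)\varepsilon_{L(s)}(e(s))$ refers to $\gamma$: each writhe appears exactly once and there is nothing formal for it to cancel against. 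What actually saves the lemma is that $\gamma_s$ must be a \emph{realizable} $2$-splitting of a \emph{topological} germ, which ties together the writhes of the two-leaved graphs, the over/under orientations, and the leaf positions. The paper's moves $2$--$5$ are exactly the devices that keep this germ-level data under control (e.g.\ Move $3$ flips $\chi$ and $w$ \emph{simultaneously}, so only $\chi w$, not $w$ alone, is a good variable). Your proposal does not identify this constraint, and the analogy with the purely leaf-diagrammatic tables of Theorem~\ref{thm:maincpx} is misleading because those tables involve no writhes at all. Without an explicit account of how the topological consistency of the merged $n$-crossing constrains $w(C)/w(C')$, the \enquote{mechanical} check you defer to the subcases would simply fail, since one can formally flip a single writhe and change the ratio.
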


This lemma is the key ingredient to show that our signs $\varepsilon$ are of topological nature. It will be proved at the end of this section.


We set:
$$\partial_i(\gamma)= \sum_{\text{all splittings}} \chi w(\Gamma_1)\chi w(\Gamma_2)\varepsilon_{L(s)}(e(s))\cdot\gamma_s $$
and extend this into a linear map $\partial_i: \mathcal{G}_i \rightarrow \mathcal{G}_{i-1}$. The reason for this map not to be graded lies essentially in the bunch of two-leaved polygons that appear as a result of splitting a germ.
\newpage
\begin{theorem}\begin{enumerate}\label{thm:top}
\item The maps $\partial_i$ and spaces $\mathcal{G}_i$ together form a chain complex.
\item There is a unique collection of maps $$\phi_\gamma: C_\gamma \hookrightarrow \mathcal{G}_{ \i(\gamma)},$$ such that $\phi_\gamma(1_\gamma) = \gamma$ if $\i(\gamma)=0$, and such that all the following diagrams commute:

\begin{center}
\begin{tikzpicture}[baseline=(current bounding box.center)]
\node (.) at (1,-0.9) {.};
\node (1b) at (-2.5,0.9) {$\mathcal{G}_{\i(\gamma)}$};

\node (2) at (0,0.9) {$\mathcal{G}_{ \i(\gamma)-1}$};

\node (3b) at (-2.5,-0.9) {$C_\gamma$};
\node (4) at (0,-0.9) {$\oplus_0 C_{\gamma_k}$};

\draw[->,>=latex] (1b) -- (2) node[midway,above] {$\partial _{\i(\gamma)}$};

\draw[->,>=latex] (3b) -- (4) node[midway,below] {$\partial_\gamma$};

\draw[right hook->,>=latex] (3b) -- (1b) node[midway,left] {$\phi_\gamma$};
\draw[right hook->,>=latex] (4) -- (2) node[midway,right] {$\oplus_0 \phi_{\gamma_k}$};


\end{tikzpicture}
\end{center}

\item The map $\phi_\gamma$ maps $C_\gamma$ isomorphically onto the submodule $\ZZ\gamma\subset \G_{\i(\gamma)}$. Hence the preimage $\phi_\gamma^{-1}(\gamma)$ defines a canonical co-orientation of the simple stratum $\K_\gamma$.
\end{enumerate}
\end{theorem}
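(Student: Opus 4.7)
The plan is a simultaneous induction on $\i(\gamma)$ for all three parts, reducing (1) to the cochain identity $\delta^2=0$ of Theorem~\ref{thm:maincpx} and (2)--(3) to a combinatorial-topological sign-matching argument in which Lemma~\ref{lem:epsi} plays an essential role.

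For part (1), I would expand $\partial^2\gamma$ as a sum over successive splittings $\gamma\rightsquigarrow\gamma_s\rightsquigarrow\gamma_{ss'}$ and pair each double splitting with its reverse $\gamma\rightsquigarrow\gamma_{s'}\rightsquigarrow\gamma_{ss'}$. In the generic case, the two splittings occur at disjoint graphs of $\gamma_{ss'}$; the $\chi w$ factors are common to both orderings, and the combined $\varepsilon$-signs reduce to those of a bi-admissible pair $(e(s),e(s'))$ in the underlying leaf diagram of $\gamma_{ss'}$ --- after the observation that the presence of two-leaved extras does not affect the relevant $\varepsilon$ values, since these only see odd polygons and the polygons bounding the collapsing edge. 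Theorem~\ref{thm:maincpx} then delivers the cancellation. When the two splittings interact (the second one occurs inside $\Gamma_1$ or $\Gamma_2$ of the first), Lemma~\ref{lem:epsi} lets me rechoose the initial decomposition of the larger graph so that the computation again reduces to the generic bi-admissible pair.

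For parts (2) and (3), I induct on $\i(\gamma)$ with base case $\phi_\gamma(1_\gamma)=\gamma$ at $\i(\gamma)=0$. Suppose each $\phi_{\gamma_k}$ has been constructed as an embedding onto $\ZZ\gamma_k$ for every germ of smaller $\i$, and let $1_\gamma$ denote a generator of $C_\gamma\cong \ZZ$. The reduced meridian boundary decomposes as $\partial_\gamma(1_\gamma)=\sum_k\mu_k\cdot 1_{\gamma_k}$, where the integers $\mu_k$ are determined geometrically. I would set $\phi_\gamma(1_\gamma):=\gamma$ --- the only element of $\ZZ\gamma$ making $\phi_\gamma$ an embedding --- and verify the identity
$$\partial_{\i(\gamma)}(\gamma)=\sum_k\mu_k\,\gamma_k \in \G_{\i(\gamma)-1},$$
possibly after replacing $1_\gamma$ by $-1_\gamma$ to absorb an overall sign. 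The diagram then commutes; uniqueness follows from the inductively established injectivity of $\oplus\phi_{\gamma_k}$, and part (3) is immediate since $\phi_\gamma(C_\gamma)=\ZZ\gamma$ and $\phi_\gamma^{-1}(\gamma)=1_\gamma$ gives the canonical co-orientation of $\K_\gamma$.

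The main obstacle will be the sign identity $\partial_{\i(\gamma)}(\gamma)=\sum_k\mu_k\gamma_k$. Each combinatorial coefficient $\chi w(\Gamma_1)\chi w(\Gamma_2)\varepsilon_{L(s)}(e(s))$ must equal the geometric incidence number between $\K_{\gamma_s}$ and $\partial\K_\gamma$ at a transverse crossing of meridian discs. The verification requires a careful local analysis near the collapsing edge $e(s)$: $\lk(e)$, $\E$, $\psi$ and $\sigma_{\text{glo}}$ must be interpreted as the sign of a local Jacobian of a parametrisation of $\K_\gamma$ transverse to $\K_{\gamma_s}$, while the $\chi w$ factors carry the writhe contribution of the two-leaved extras appearing in the Type $0$~collapse. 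Lemma~\ref{lem:epsi} is the invariance statement that turns this per-splitting computation into a well-defined coefficient of $\partial_{\i(\gamma)}$, thereby making the whole construction intrinsic and completing the inductive step.
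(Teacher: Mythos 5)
Your treatment of part (1) is essentially the paper's: reduce to $\delta^2=0$ from Theorem~\ref{thm:maincpx}, noting that the $\chi w$ factors appear twice, and that the two-leaved extras are even polygons and hence invisible to the $\varepsilon$-signs. That part is sound.

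For parts (2) and (3) there is a genuine gap, and you have even named it yourself as ``the main obstacle''. You propose to establish the identity $\partial_{\i(\gamma)}(\gamma)=\sum_k\mu_k\gamma_k$ by a direct local comparison of each combinatorial coefficient with the geometric incidence number of the meridian discs, interpreting $\lk$, $\E$, $\psi$, $\sigma_{\text{glo}}$ as signs of local Jacobians. This is not carried out, would be an enormous case-by-case computation, and is not actually needed. The paper never compares coefficients term by term. Instead it observes that $\partial_i\gamma$ lifts (uniquely, by the inductive injectivity of $\oplus_0\phi_{\gamma_k}$) to a relative cycle $x$ in $(\SS_\gamma,\ \SS_\gamma\setminus\bigcup_0\D_{\gamma_k})$ of local weight $\pm 1$, and then uses a crucial topological fact that is absent from your proposal: Lemma~\ref{lem:connect}, asserting that for $i\geq 2$ the union of the meridian $(i-1)$-discs of Type $0$ is \emph{connected}. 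Connectedness is what forces $H_{i-1}(\SS_\gamma,\ \SS_\gamma\setminus\bigcup_0\D_{\gamma_k})\cong H_{i-1}(\SS_\gamma)\cong C_\gamma\cong\ZZ$, so a relative cycle of local weight $\pm 1$ is automatically a generator, and the sign-matching you are worried about comes for free: two cycles of local weight $\pm 1$ in a group $\cong\ZZ$ can only differ by a global sign. Without this lemma your argument has to prove the proportionality of two signed sums by hand, which is precisely the computation the paper was designed to avoid.

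A second, smaller omission: the inductive step via Lemma~\ref{lem:connect} requires $i\geq 2$, so the case $i=1$ has to be treated separately (the paper does this by a direct check on a single Reidemeister III move, noting which factors of $\varepsilon$ flip across the move). Your uniform induction starting at $\i(\gamma)=0$ silently skips this, and the generic mechanism does not apply there.

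Finally, note that Lemma~\ref{lem:epsi} plays a more modest role than you assign it: in the paper it guarantees that $\partial_i$ is well-defined (independence of the choice of $\Gamma_1,\Gamma_2$ in a $2$-splitting). It is not the engine behind the co-orientation statement; that engine is Lemma~\ref{lem:connect}.
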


\begin{proof}
The proof of the first point is identical with that of Theorem~\ref{thm:maincpx}. One only has to notice that the signs $\chi w$ always appear twice and cancel themselves in the computation of $\partial\circ \partial$, and that the collection of two-leaved polygons that result from a splitting does not affect the computations, because they are even polygons.

We prove points $2.$ and $3.$ simultaneously, by induction on $i$. 

When $i=0$, there is nothing to prove. The case $i=1$ also needs being treated separately. Here it suffices to notice that on the two sides of a Reidemeister III move, the sign $\chi w(\Gamma_1)\chi w(\Gamma_2)\varepsilon_{L(s)}(e(s))$ takes opposite values -- indeed, such a move switches the signs $\lk(P_1, P_2)$, $\E(P_1, e)$ and $\E(P_2, e)$, and leaves the remaining signs unchanged. So $\phi_\gamma$ is defined uniquely by mapping to $\gamma$ the generator of $C_\gamma$ that is oriented from the negative side to the positive side. Point $3.$ is then satisfied, and it implies that the direct sum of any collection of maps $\phi_\gamma$ is injective.

Now let $i\geq 2$ and assume that $2.$ and $3.$ hold up to $i-1$. The crucial point is the following.

\begin{lemma} \label{lem:connect}
If $i\geq2$, then in the cell decomposition of a meridian sphere $\SS^{i-1}_\gamma$ made of meridian discs, the union of all $(i-1)$-discs corresponding to Type $0$~degeneracies is connected.
\end{lemma}

Assuming this lemma, consider a germ $\gamma\in \mathcal{G}_i$. By definition, $\partial_i \gamma$ lies in $\oplus_0 C_{\gamma_k}$, so by induction (Point $3.$) it has a unique preimage $x$ by $\oplus_0 \phi_{\gamma_k}$. By Point $1.$ of the theorem and by induction (Point $2.$), $x$ lies in the kernel of $\oplus_0 \partial_{\gamma_k}$. In other words, it is a relative cycle in $(\SS_\gamma, \SS_\gamma\setminus \bigcup_0\D_{\gamma_k})$.
Also, it has local weight $\pm 1$, so it is a generator of $H_{i-1}(\SS_\gamma, \SS_\gamma\setminus \bigcup_0 \D_{\gamma_k})$, which by Lemma~\ref{lem:connect} is canonically isomorphic to $H_{i-1}(\SS_\gamma)\cong H_i(\D_\gamma, \SS_\gamma)\cong C_\gamma$. By pushing $x$ through these isomorphisms, we obtain a generator of $C_\gamma$, and $\phi_\gamma$ is uniquely defined by the fact that it must map this generator to $\gamma$. This terminates the proof up to Lemma~\ref{lem:connect}.

\begin{proof}[Proof of Lemma~\ref{lem:connect}]

If $\gamma$ has at least two graphs $\Gamma$ and $\Gamma^\prime$ with more than two leaves, then any two splittings respectively along $\Gamma$ and $\Gamma^\prime$ have a piece of boundary in common. If gamma has only one graph with $n>2$ leaves, then $n$ must be at least $4$ so that $i=n-2\geq 2$. Here, any two $2$-splittings sliding different branches away have a common piece of boundary, and any $k$-splitting ($k\geq 3$) has a common boundary piece with $n-1$ distinct $2$-splittings.
\end{proof}
\end{proof}


\begin{proof}[Proof of Lemma~\ref{lem:epsi}]
We first prove the result in one particular case, then proceed by induction, using a number of \enquote{moves} that allow one to join any splitting of any germ.

First note that by symmetry of the formula in $\left\lbrace \Gamma_1, \Gamma_2 \right\rbrace$ we need not check separately the case $n=3$, even though $\Gamma_2$ is not uniquely determined. Figure \ref{pic:gamman} shows a splitting $\gamma^+(2,n-1)$ of the germ with only one graph, with $n$ leaves, and only $+$ signs. The orientations of the arrows in the $(n-1)$-gon depend on the way to connect virtually the branches of the $(n-1)$-crossing; they are not shown because the sign $\varepsilon$ only depends on the underlying polygon. 
One easily sees that in $\gamma^+(2,n-1)$, $\chi w(\Gamma_1)$ is $-1$ for any choice of $\Gamma_1$, and only the maps $\lk$ and $\E$ can contribute non-trivially in $\varepsilon$.

\begin{figure}[h!]
\centering 
\psfig{file=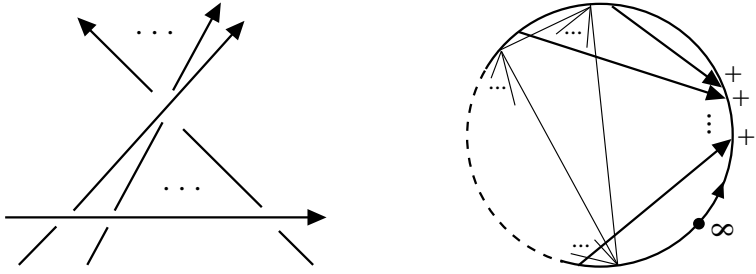, scale=0.9}
\caption{The germ $\gamma^+(2,n-1)$,  splitting of the positive $n$-branch crossing.}
\label{pic:gamman}
\end{figure}

\begin{itemize}
\item $\lk(P_1,P_2)$ is $+1$ if $\Gamma_1$ is the topmost arrow ($\Gamma_1$-candidate) in the diagram on the right of Fig.\ref{pic:gamman}, and alternates up to $(-1)^n$ for the bottom arrow.
\item $\E (P_1,e(s))$ has the same alternating property and is $-1$ for the bottom arrow
\item If $n-1$ is even, $\E (P_2,e(s))$ has the same value $+1$ for any choice of $\Gamma_1$ (and this also holds obviously if $n-1$ is odd).
\end{itemize}
This proves that the sign $\chi w(\Gamma_1)\chi w(\Gamma_2)\varepsilon_{L(s)}(e(s))$ is $(-1)^n$ for any choice of $\Gamma_1$ in $\gamma^+(2,n-1)$.

We now prove the invariance of the result under the following moves:

\begin{enumerate}
\item Adding a bystander graph;
\item Making one crossing change in  the $(n-1)$-crossing;
\item Making one crossing change at one of the $(n-1)$ $\Gamma_1$-candidates;
\item Reversing the orientation of a branch of the $(n-1)$-crossing;
\item Sliding the branch that was split away from the $n$-crossing to the other side of the $(n-1)$-crossing;
\item Changing the order in which the $n$ local branches are virtually connected;
\item Moving the point $\infty$ to another region.
\end{enumerate}

We always neglect the orientation and sign changes on $\Gamma_2$, which are harmless.
Move $1$ may only modify the contribution of $\sigma_{\text{glo}}$, but it does so in the same way for all choices of $\Gamma_1$, essentially because $\Gamma_1$ is always even. Move $2$ has no effect at all. Move $3$ only changes $\chi(\Gamma_1)$ and $w(\Gamma_1)$ into their opposite, so that $\chi w(\Gamma_1)$ remains the same. Move $6$ commutes with the other moves, so it suffices to see that it does not affect the result for $\gamma^+(2,n-1)$.

The effect of Move $5$ on the germ is identical with changing the sign of all $\Gamma_1$-candidates, which does not affect the result, and then formally applying the effect of $(n-1)$ moves of type $4$. So we are left with the two moves $4$ and $7$, shown on Fig.\ref{pic:moves47}. Move $4$ does not affect any signs for other choices of $\Gamma_1$ than the one on the picture; for this one, $\chi w$ is changed into its opposite, and so is $\E (P_1, e(s))$. If $n-1$ is odd, nothing else changes; otherwise, both the linking number of $e(s)$ and the even index of $P_2$ are also reversed.

Move $7$ changes $\E (P_1, e(s))$ into its opposite for all choices of $\Gamma_1$ other than the one we can see on Fig.\ref{pic:moves47} -- and it has no other effect for them. For the last choice of $\Gamma_1$, it changes $\chi$, and nothing else if $n-1$ is odd; otherwise it also changes both even indices of $P_1$ and $P_2$.
\begin{figure}[h!]
\centering 
\psfig{file=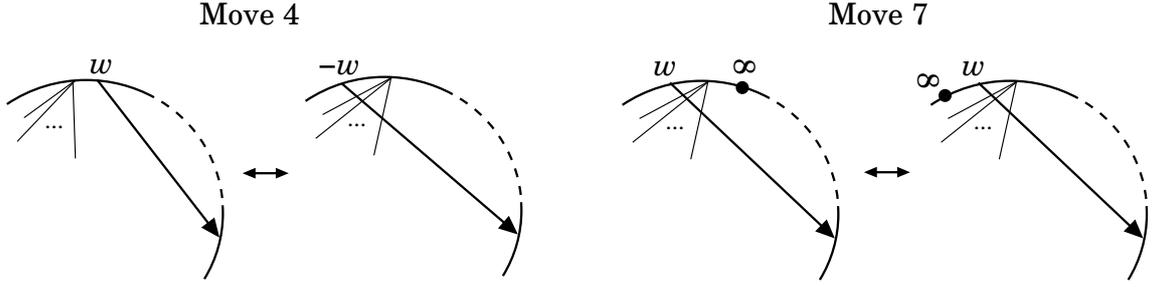, scale=0.9}
\caption{The effect of moves $4$ and $7$ on germs.}
\label{pic:moves47}
\end{figure}
\end{proof}

\section{Main result}\label{sec:main}

We now introduce a degenerate version of arrow diagrams, designed to \textit{count subgerms} in the spirit of \cite{PolyakViro}. Subgerms are the algebraic artifact that allows one to see whether a knot respects a germ, and in how many ways. They are also natural in that if $\gamma \rightsquigarrow \gamma^\prime$ is a degeneracy, then $\gamma$ may appear in the meridian of $\gamma^\prime$ only as a subgerm.

\subsection{Tree diagrams}

Let $P$ be a polygon in $\SS^1\setminus \left\lbrace\infty \right\rbrace$ of cardinality greater than $1$. A \textit{spanning tree} for $P$ is a collection of ordered couples $(v,w)$ with $v,w\in P$, still called \textit{arrows}, such that the corresponding abstract oriented graph is a tree. The number of arrows in a spanning tree is always equal to the cardinality of the underlying polygon minus $1$.

A \textit{tree diagram} is a finite collection of pairwise disjoint polygons in $\SS^1\setminus \left\lbrace\infty \right\rbrace $ endowed with spanning trees. We keep denoting such diagrams by the letter \enquote{$A$} to respect the tradition of arrow diagrams, and save \enquote{$T$} for single spanning trees. Tree diagrams naturally inherit the Gauss and cohomological degrees defined for leaf diagrams, namely:
\begin{itemize}
\item The Gauss degree $\deg(A)$ of a tree diagram is equal to its total number of arrows.
\item The cohomological degree $\i(A)$ is the Gauss degree minus the number of colours (trees).\end{itemize}

Again, tree diagrams are regarded up to positive homeomorphisms of the real line $\SS^1\setminus \left\lbrace \infty\right\rbrace $. The $\ZZ$-module freely generated by equivalence classes of tree diagrams of degree $d$ and codimension $i$ is denoted by $\mathcal{A}_d^i$. Note that $\mathcal{A}_d^i$ is trivial whenever $i$ is greater than $d-1$, and whenever $i$ or $d$ is negative (see Remark~\ref{rem:isolated}).

\subsubsection*{The triangle relation}

Observe that a spanning tree $T$ defines a partial order on the underlying polygon: say that $v<_T w$ if $T$ contains the arrow $(v,w)$, and extend this definition by transitivity -- which is possible because $T$ is a tree. We say that $T$ is \textit{monotonic} if the relation $<_T$ is total. Accordingly, a tree diagram is called \textit{monotonic} if all of its trees are so. Monotonic spanning trees for a given polygon $P$ are in one-to-one correspondence with total orders on $P$. Denote by $\nabla(T)$ the set of all monotonic spanning trees that correspond to total orders compatible with $<_T$.

\begin{definition}\label{def:tri} The \textit{triangle relation} is the equivalence relation on $\mathcal{A}_d^i$ generated by the equalities
\begin{equation}\label{eq:tri}
A=\sum_{T\p \in \nabla(T)} A_{T\p},
\end{equation}
where $A$ is a tree diagram that contains $T$ as a spanning tree and $A_{T\p}$ is the diagram obtained from $A$ by replacing $T$ with $T\p$. We denote the quotient $\ZZ$-module by $\tilde{\mathcal{A}}_d^i$. It is naturally isomorphic with the subspace of $\mathcal{A}_d^i$ spanned by monotonic tree diagrams.\end{definition}

\begin{remark}
This relation originated in the work of M.Polyak on arrow diagrams -- see \cite{P1}, and also \cite{PolyakTalk, PVCasson}. It owes its name to the fact that it is locally generated by the relation schematically depicted on Fig.\ref{pic:tri}.\end{remark}
\begin{figure}[h!]
\centering 
\psfig{file=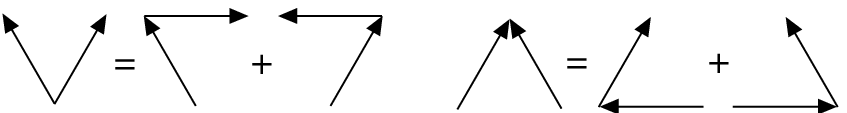, scale=1.2}
\caption{Local triangle relations -- only a part of a spanning tree is shown; the remaining invisible parts should be identical for all three diagrams in a given equality.} \label{pic:tri}
\end{figure}

\subsubsection*{Reidemeister farness}

\begin{definition}
The \textit{Reidemeister farness} of monotonic diagrams is defined similarly to that of germs (Definition~\ref{def:far}).The submodule of $\tilde{\mathcal{A}}_d^i$ generated by R-far monotonic diagrams is denoted by
$\tilde{\mathcal{A}}_{d, \text{far}}^i$. This definition makes sense since any $\alpha\in \tilde{\mathcal{A}}_d^i$ has a unique representative involving only monotonic diagrams.
\end{definition}

\subsection{The pairing of tree diagrams with germs}\label{sec:I}

\begin{definition}[partial germs and signed tree diagrams]
A \textit{partial germ} is a leaf diagram whose every polygon is enhanced into a \textit{connected} abstract graph with oriented and signed arrows. The difference with germs is that here the graphs need not be complete. A partial germ whose every graph is a tree is called a \textit{signed tree diagram}. 

Partial germs inherit the degrees $\deg$ and $\i$ from their underlying leaf diagrams. The corresponding $\ZZ$-modules of signed tree diagrams are denoted by $\mathcal{T}_{i,d}$ and $\mathcal{T}_i$.
\end{definition}

\begin{definition}
A \textit{subgerm} of a germ $\gamma$ is the result of forgetting an arbitrary number of its arrows, in such a way that every graph corresponding to a polygon with more than two leaves remains connected -- although two-leaved polygons may completely disappear. This condition means that subgerms must remember the codimension of $\gamma$ -- \textit{but the Gauss degree may drop}.

We set $\I(\gamma)$ to be the formal sum of all subgerms of $\gamma$ that are signed tree diagrams. It is understood that subgerms are counted with multiplicity if the removal of distinct sets of arrows yields homeomorphic results. This defines a linear map 
$$\I: \mathcal{G}_i \rightarrow \mathcal{T}_i.$$
If $\tau$ is a signed tree diagram, we define $\T(\tau)$ as the underlying tree diagram, multiplied by the \textit{product of the signs of all arrows of two-leaved polygons}. Again this extends into a linear map 
$$\T:\mathcal{T}_{i,d} \rightarrow \mathcal{A}_d^i.$$

\begin{remark}
The fact that the map $\T$ disregards the signs of arrows associated with polygons that have more than two leaves should be interpreted this way: for these polygons, the signs of the arrows have already contributed by entering the co-orientation defined by germs on their associated strata. In other words, when a simple crossing merges with others into a multiple crossing, we stop regarding its writhe as making sense individually. See Lemma~\ref{lem:epsi} and Theorem~\ref{thm:top}.
\end{remark}
\end{definition}

\begin{definition}
For $\alpha\in \tilde{\mathcal{A}} _d^i$
and $\gamma\in \mathcal{G}_i$, we set  $$\<\!\<\alpha, \gamma\>\!\>= \left\langle \alpha, \T\circ\I (\gamma) \right\rangle,$$
where $\left\langle \cdot,\cdot \right\rangle$ is the Kronecker delta on tree diagrams, extended by bilinearity.
\end{definition}

We have to prove that this is a good definition, that is:

\begin{lemma}
Let $\nabla\in \mathcal{A}_d^i$ be a triangle relator, i.e. the difference between the two sides of Equation~\eqref{eq:tri}.
Then: $$\forall \gamma\in \mathcal{G}_i,\hspace*{0.2cm}
\<\!\<\nabla, \gamma\>\!\>=0.$$
\end{lemma}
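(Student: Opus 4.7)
The plan is to fix the germ $\gamma \in \mathcal{G}_i$ and the spanning tree $T$ used to produce $\nabla = A - \sum_{T' \in \nabla(T)} A_{T'}$. If $T$ spans a two-leaved polygon then $T$ is already monotonic, $\nabla(T) = \{T\}$ and $\nabla = 0$; so we may assume $T$ spans a polygon $P$ of $A$ with at least three leaves, inducing the partial order $<_T$ on $P$ by transitive closure of its arrows.

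By definition, $\<\!\<B, \gamma\>\!\> = \<B, \T \circ \I(\gamma)\>$ is a $\pm 1$-weighted count of the subgerms $\tau$ of $\gamma$ whose underlying oriented tree diagram (modulo positive homeomorphism of $\RR$) equals $B$, each such $\tau$ being weighted by the product of the signs of the arrows of its two-leaved polygons (this is all that $\T$ records). Because $A$ and every $A_{T'}$ carry the same leaf diagram and the same spanning trees on every polygon other than $P$, any contributing subgerm $\tau$ must induce the same matching outside $P$ between polygons of $A$ and polygons of $\gamma$, retain the same subset of two-leaved polygons of $\gamma$, and carry the same $\T$-weight in both pairings. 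All of this shared data factors out, so the proof reduces to verifying, for each fixed polygon $P_\gamma$ of $\gamma$ matched with $P$, that the number of oriented spanning subtrees of $\gamma|_{P_\gamma}$ equal to $T$ coincides with the total number of those equal to some $T' \in \nabla(T)$.

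The key observation is that $\gamma|_{P_\gamma}$ is a complete acyclic oriented graph (Definition~\ref{def:germs}(2)) and hence carries a unique total order $<_\gamma|_{P_\gamma}$; moreover, any oriented subgraph of it is determined by its oriented type, since the complete graph contains exactly one arrow between any two distinct vertices. Consequently a copy of $T$ sits inside $\gamma|_{P_\gamma}$ iff every arrow $(v,w) \in T$ satisfies $v <_\gamma w$, i.e.\ iff $<_\gamma|_{P_\gamma}$ is a linear extension of $<_T$. Similarly, since $<_{T'}$ is already total for each $T' \in \nabla(T)$, a copy of $T'$ sits inside $\gamma|_{P_\gamma}$ iff $<_{T'} = <_\gamma|_{P_\gamma}$. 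By the very definition of $\nabla(T)$ as the set of monotonic trees whose total orders are the linear extensions of $<_T$, summing these indicators over $T' \in \nabla(T)$ gives $1$ if $<_\gamma|_{P_\gamma}$ extends $<_T$ and $0$ otherwise, matching the left-hand side.

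The main potential obstacle is the bookkeeping that isolates the polygon $P$: one must check that the matching of polygons outside $P$ and the $\T$-sign contribution from two-leaved arrows truly coincide on $A$ and on every $A_{T'}$. This is immediate because the triangle relation alters only the spanning tree on $P$, leaving all other combinatorial data verbatim. With that factorisation in place, the proof collapses to the tautology that the linear extensions of $<_T$ are precisely the monotonic refinements indexed by $\nabla(T)$, and the lemma follows.
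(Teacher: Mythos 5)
Your proof is correct and takes essentially the same approach as the paper's (much more terse) argument. The paper's proof simply cites the two facts you make explicit: the graphs of $\gamma$ are complete and consistently (acyclically) oriented, so each polygon of $\gamma$ carries a unique total order and contains each oriented tree at most once; and the map $\T$ discards writhes on big polygons, so the $\T$-weights of $A$ and of every $A_{T'}$ coincide. Your detailed verification that the count localises to the polygon $P$ and reduces to comparing the indicator of ``$<_\gamma$ extends $<_T$'' with the sum of indicators ``$<_\gamma = <_{T'}$'' over $T'\in\nabla(T)$ is exactly the content compressed into the paper's one-line conclusion.
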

\begin{proof}
We may assume that $\gamma$ is a single germ. The result follows then from the facts that the graphs of $\gamma$ are complete and consistently oriented, and that the map $\T$ disregards the signs of crossings in trees with more than two leaves.
\end{proof}

This elementary proof should be compared with that of  \cite[Lemma $1.9$]{FT1cocycles}. There, the result was deeply related with the fact that the germ was topological. Here, all the topology is confined in the co-orientation associated with germs, and this lemma actually holds for abstract germs.

\begin{definition}
Let $c$ be a PL $i$-chain in $\K\setminus \Sigma$ that is transverse to the stratification. Then $c$ intersects finitely many simple $i$-strata $\gamma_p$, with intersection numbers $\eta_p$ defined by the co-orientation from Theorem~\ref{thm:top}.
For $\alpha\in \tilde{\mathcal{A}} _d^i$, we set: $$\left\lbrace \alpha,c \right\rbrace = \<\hspace*{-0.1cm}\< \alpha, \sum_p \eta_p \gamma_p \>\hspace*{-0.1cm}\> .$$\end{definition}

We are now in a position to see why degeneracies of type \du~do not deserve particular attention.

\begin{lemma}\label{lem:du}
Let $\zeta$ be an \emph{almost simple stratum}, that is, a boundary component of a simple $i$-stratum corresponding to a Type \du~degeneracy.
Let $\SS_\zeta$ be the meridian sphere of $\zeta$. Then:
$$\forall d\geq 0,\forall \alpha\in \tilde{\mathcal{A}} _{d,\text{far}}^i, \left\lbrace \alpha,\SS_\zeta \right\rbrace = 0.$$
\end{lemma}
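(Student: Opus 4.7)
The plan is to identify $\SS_\zeta$'s transverse intersections with simple $i$-strata, reduce the pairing $\{\alpha,\SS_\zeta\}$ to a combinatorial expression built from $\T\circ \I$, and finally exploit the R-farness of $\alpha$ to conclude vanishing.

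First, because $\zeta$ comes from a Type \du~degeneracy, one specific arrow $a_0$ of the underlying germ has its writhe replaced by a collinearity condition at $\zeta$. Since the degeneracy involves only one red (non-multiple) crossing, $a_0$ spans a two-leaved polygon $P_0=\{v_0,w_0\}$. Transverse to $\zeta$ in $\K$ one finds exactly two simple $i$-strata $\gamma^+$ and $\gamma^-$, which differ only in the sign of $a_0$. An adaptation of the parity computation carried out in the $i=1$ step of the proof of Theorem~\ref{thm:top} shows that flipping the sign of $a_0$ reverses the contribution $\chi w$ associated with $P_0$ but leaves the other factors of $\varepsilon$ intact, so the canonical co-orientations assign opposite intersection numbers $\eta_\pm$ to $\gamma^\pm$; this yields $\{\alpha,\SS_\zeta\}=\pm \<\!\<\alpha,\gamma^+-\gamma^-\>\!\>$.

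Next, I would compute $\T\circ\I(\gamma^+-\gamma^-)$ explicitly. The subgerms of $\gamma^\pm$ that drop $P_0$ contribute identically to $\I(\gamma^+)$ and $\I(\gamma^-)$ and hence cancel in the difference. Subgerms retaining $P_0$ contribute with opposite signs on $a_0$, a sign that $\T$ picks up as a factor because $P_0$ remains two-leaved in every such subgerm. Writing $\gamma'=\gamma^+\setminus P_0$ for the germ obtained by removing $P_0$ entirely, one should obtain the factorisation
\[
\T\circ\I(\gamma^+-\gamma^-)\ =\ 2\,w_{\gamma^+}(a_0)\cdot\bigl(T(P_0)\sqcup \T\circ\I(\gamma')\bigr),
\]
where $T(P_0)$ is the unsigned one-arrow tree diagram on $\{v_0,w_0\}$ and $\sqcup$ denotes disjoint union of tree diagrams.

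The main obstacle is the third step: showing that the right-hand side pairs trivially with every R-far monotonic diagram. Equivalently, every monotonic tree diagram of the form $T(P_0)\sqcup A'$, with $A'$ appearing with nonzero coefficient in $\T\circ\I(\gamma')$, must be R-close in the sense of Definition~\ref{def:far}. I expect this to emerge from a local inspection around the \du-degenerating crossing: the specific embedding of $v_0$ and $w_0$ in $\gamma^\pm$ together with the polygon of $\gamma^\pm$ whose edge collapses at $\zeta$ should consistently produce an RI or RII neighbour configuration involving $\{v_0,w_0\}$. The case analysis should parallel that of Tables~\ref{tab:global} and \ref{tab:local} but be localised around $P_0$, and would rely on $\gamma^\pm$ themselves being R-far, so that the only combinatorial freedom in how $P_0$ meets the rest of the germ is the one imposed by the \du~degeneracy itself.
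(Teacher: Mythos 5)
There is a genuine gap, and in fact two. First, your premise that the Type \du~degeneracy must involve a two-leaved polygon is false: a red crossing in the paper's terminology can be a multiple crossing, so $a_0$ may well belong to a polygon with more than two leaves. The paper's proof splits precisely into these two cases, and the multiple-crossing case is the delicate one: there $\SS_\zeta$ meets $2^{k-2}$ simple strata (where $k\geq 3$ is the number of branches of the multiple crossing), and one needs R-II farness both to kill all but two of these and to argue that the sign of $a$ is then disregarded by $\T$ because $a$ belongs to a polygon with more than two leaves. Your proposal omits this case entirely.

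Second, even in the isolated case your identification of the two transverse strata is wrong. Flipping the writhe of $a_0$ is not the generic perturbation of the collinearity locus: a generic perturbation of two branches that are tangent at a crossing either separates them (zero crossings) or produces a bigon (two crossings of opposite writhe), not a single crossing of the opposite sign. The paper correctly takes the two strata to be $\gamma_0$ (the arrow forgotten) and $\gamma_\pm$ (the arrow duplicated into two arrows of opposite writhe), exactly the two sides of a Reidemeister~II move; the vanishing then follows by the classical R-II invariance argument for GPV-type formulas, using R-II farness of $\alpha$. Because your $\gamma^\pm$ are not the actual neighboring strata, the factorisation you propose for $\T\circ\I(\gamma^+-\gamma^-)$ is built on a false premise, and the final step you sketch (showing $T(P_0)\sqcup A'$ is always R-close) would not go through -- indeed it is not true in general, and the correct argument does not need it.
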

It means that the cocyclicity condition for R-far cochains is empty around such strata.

\begin{proof}
Denote by $a$ the arrow in the germ $\gamma$ that is subject to \du~degeneracy. The situation is quite different according to whether or not $a$ is part of a multiple crossing.

First assume that $a$ is isolated. Then $\SS_\zeta$ intersects exactly two simple strata $\gamma_0$ and $\gamma_\pm$, corresponding respectively to $\gamma$ with the arrow $a$ forgotten, and $\gamma$ with the arrow $a$ duplicated into two arrows with opposite writhe, that intersect or not depending on the geometric condition of $\zeta$. We have exactly the two sides of a usual Reidemeister II move. Moreover, since the co-orientation of a germ depends only on the configuration of its graphs with more than two leaves, $\gamma_0$ and $\gamma_\pm$ induce opposite orientations on $\SS_\zeta$, so that, up to sign:
$$\left\lbrace \alpha,\SS_\zeta \right\rbrace = \<\hspace*{-0.06cm}\< \alpha, \gamma_0 - \gamma_\pm \>\hspace*{-0.06cm}\> .$$
The result follows by classical arguments.
\begin{figure}[h!]
\centering 
\psfig{file=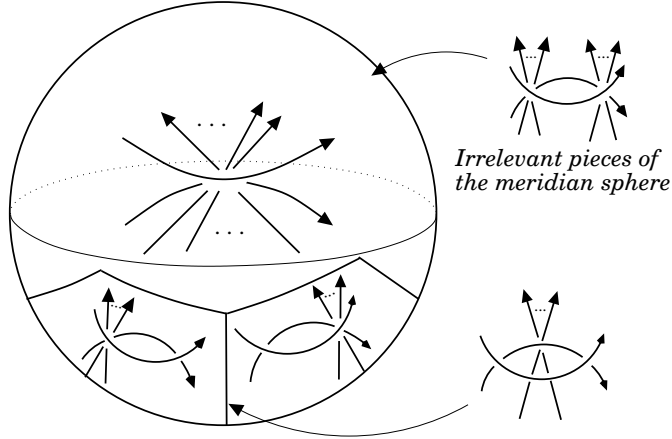, scale=0.70}
\caption{Meridian of an almost simple stratum (case of a multiple crossing).} \label{pic:du}
\end{figure}

Now assume that $a$ is part of a multiple crossing, with $k\geq 3$ branches (two of which have tangent projections). This time $\SS_\zeta$ intersects $2^{k-2}$ simple $i$-strata, obtained from $\zeta$ by duplicating $a$ into two arrows with opposite sign, and then form two new multiple crossings by sharing the remaining $k-2$ branches among those two. However, one of the two arrows $a_+$ and $a_-$ must remain isolated so that subdiagrams stand a chance to be R-II far.
Hence only two diagrams may contribute, $\gamma_+$ and $\gamma_-$, as indicated by Fig.\ref{pic:du}. One can see on the picture that they have a piece of boundary in common (in fact, two): that is the key allowing us to compare their orientations. Indeed, $\gamma_+$ and $\gamma_-$ induce the same orientation on their common boundary, hence they induce opposite orientations on $\SS_\zeta$, and again, up to sign:
$$\left\lbrace \alpha,\SS_\zeta \right\rbrace = \<\hspace*{-0.06cm}\< \alpha, \gamma_+ - \gamma_- \>\hspace*{-0.06cm}\> .$$
Now since $\alpha$ is R-II far, the isolated duplicate of $a$ must be deleted for a subdiagram to contribute, so that the relevant subdiagrams in $\gamma_+$ are also subdiagrams in $\gamma_-$, with only difference given by the sign of $a$. But this sign is disregarded by $\<\hspace*{-0.06cm}\< \cdot, \cdot \>\hspace*{-0.06cm}\>$, because $a$ is a part of a multiple crossing.
\end{proof}

\subsection{Cohomology of tree diagrams and of the space of knots}

Given a tree diagram $A$, an edge is called admissible if it is so in the underlying leaf diagram $L$. For such an edge $e$ there is a natural way to define a tree diagram $A_e$ which is a lift of $L_e$. Namely, if $e$ is bounded by the leaves $v$ and $w$, the arrows of $A_e$ are the arrows of $A$ where $w$ is replaced with $v$ every time it appears. This edge-shrinking process is compatible with the triangle relations. 
We define a linear map $ \tilde{\delta}_d^i: \tilde{\mathcal{A}}_d^{i-1} \rightarrow \tilde{\mathcal{A}}_d^i$
on the generators by
$$
\tilde{\delta}_d^i (A) =  \sum_{ e \text{ admissible}}
\chi(\Gamma_v)\chi(\Gamma_w)\varepsilon_L(e) \cdot L_e,$$
with the consistency $\chi$ as in Definition~\ref{def:cons}.

We are now ready for the main theorem of this paper.

\begin{theorem}\label{thm:main}
\begin{enumerate}
\item The collection of maps $\tilde{\delta}_d^i$ and sets $\tilde{\mathcal{A}}_d^{i}$ forms a graded, finite cochain complex. We denote by $H_{d,\text{far}}^i$ the submodule of those $i$-th homology classes in degree $d$  that have a representative cocycle in $\tilde{\mathcal{A}} _{d,\text{far}}^i$.

\item (Stokes formula) For any $d\geq 0$, $i\geq 1$, $\alpha\in \tilde{\mathcal{A}} _{d,\text{far}}^{i-1}$
and $\gamma\in \mathcal{G}_i$, 
$$\<\hspace*{-0.08cm}\<\tilde{\delta}_d^i (\alpha), \gamma\>\hspace*{-0.08cm}\>= \<\hspace*{-0.06cm}\<\alpha, \partial_i (\gamma)\>\hspace*{-0.06cm}\>.$$
\item There is a natural map
$$H_{d,\text{far}}^i \rightarrow H^i(\K\setminus \Sigma)$$
induced by the pairing $\left\lbrace \cdot,\cdot \right\rbrace  $.
For $i=0$, the image of this map consists of invariants induced by homogeneous Goussarov-Polyak-Viro formulas for long virtual knots \cite{GPV}. For $i=1$, the image consists of arrow-germ formulas as defined in \cite{FT1cocycles}.
\end{enumerate}
\end{theorem}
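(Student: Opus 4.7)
The plan is to dispatch Part~1 by importing the proof scheme of Theorem~\ref{thm:maincpx}, to establish Part~2 by a sign-tracked bijection that forms the technical heart of the section, and to derive Part~3 topologically from Part~2 together with Lemma~\ref{lem:du}.

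For Part~1, I would first verify that $\tilde\delta$ descends to the triangle quotient: edge-shrinking acts on the underlying leaf diagram while the triangle relation modifies only spanning trees, so the two operations commute. The identity $\tilde\delta^2=0$ then follows from the bi-admissible-edge analysis of Theorem~\ref{thm:maincpx}; the additional consistency factors $\chi(\Gamma_v)\chi(\Gamma_w)$ attached to each shrunk edge do not disrupt the $\varepsilon$-cancellation, by the same mechanism that the $\chi w$ factors do not disrupt it in the proof of Theorem~\ref{thm:top}(1) (merged trees in the shrunk diagrams have at least three leaves, so their $\chi$ equals~$1$, while the remaining $\chi$ factors appear once in each ordering of a bi-admissible pair and form squares). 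Finiteness of $\tilde{\mathcal{A}}_d^{\ast}$ is immediate.

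Part~2 is the technical core. I would expand both pairings: on the right, $\<\!\<\alpha,\partial_i\gamma\>\!\>$ sums over pairs $(\gamma_s,\sigma)$ where $\gamma_s$ is a splitting of $\gamma$ and $\sigma\subseteq\gamma_s$ is a subgerm with $\T(\sigma)$ equal to a monotonic representative $A$ of $\alpha$; on the left, $\<\!\<\tilde\delta\alpha,\gamma\>\!\>$ sums over pairs $(e,\tau)$ where $e$ is an admissible edge of $A$ and $\tau\subseteq\gamma$ is a subgerm with $\T(\tau)=A_e$. The bijection sends $(\gamma_s,\sigma)\mapsto(e,\tau)$ by shrinking the favourite edge $e(s)$ inside $\sigma$: the spanning trees of $\sigma$ on $\Gamma_1$ and $\Gamma_2$ merge via the identification of the endpoints of $e(s)$ into a single spanning tree of $\tau$, while the bystander two-leaved graphs of $\gamma_s$ persist unchanged as two-leaved polygons of $\tau$. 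Under this bijection $\varepsilon_{L(s)}(e(s))=\varepsilon_L(e)$ by construction. The remaining sign equality $\chi w(\Gamma_1)\chi w(\Gamma_2)\cdot w(\sigma) = \chi(\Gamma_v)\chi(\Gamma_w)\cdot w(\tau)$, with $w(\cdot)$ denoting the product of writhes picked up by $\T$ on two-leaved polygons, reduces to a short case analysis: the bystander writhes contribute to $w(\sigma)$ but not to $w(\tau)$; when some $\Gamma_i$ is two-leaved it coincides with $\Gamma_v$ or $\Gamma_w$, and the $w$-piece of $\chi w(\Gamma_i)$ absorbs into $w(\sigma)$; when $\Gamma_i$ has more than two leaves, both $\chi w(\Gamma_i)$ and the matching consistency $\chi(\Gamma_{v\text{ or }w})$ equal~$1$. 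Lemma~\ref{lem:epsi} ensures that the $k=2$ splitting ambiguity is harmless.

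For Part~3, I would set $\phi([\alpha])(c):=\{\alpha,c\}$ for PL transverse $i$-chains $c$ in $\K\setminus\Sigma$. Well-definedness on cohomology classes and cocyclicity of $\phi([\alpha])$ both rest on a topological Stokes identity $\{\alpha,\partial d\}=\{\tilde\delta\alpha,d\}$ for transverse $(i+1)$-chains $d$. This identity follows by analysing the meridian contributions to $\partial d$ around each codimension-$(i+1)$ stratum met by $d$: Type~\du~meridians vanish by Lemma~\ref{lem:du}; Type~$1$, \dd, \tt~meridians vanish because R-farness of $\alpha$ kills every subgerm that witnesses a generalised Reidemeister configuration; around a simple codimension-$(i+1)$ germ $\gamma_p$, Theorem~\ref{thm:top} identifies the reduced meridian system with $\partial_{i+1}\gamma_p$, so the local contribution is $\<\!\<\alpha,\partial_{i+1}\gamma_p\>\!\> = \<\!\<\tilde\delta\alpha,\gamma_p\>\!\>$ by Part~2, and summation over $p$ gives $\{\tilde\delta\alpha,d\}$. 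For a cocycle $\alpha$ the right-hand side vanishes; for $\alpha=\tilde\delta\beta$ the same identity shows that $\phi([\alpha])$ vanishes on cycles, hence is a coboundary. The identifications for $i=0$ (homogeneous GPV invariants of \cite{GPV}) and $i=1$ (arrow-germ $1$-cocycle formulas of \cite{FT1cocycles}) follow by recognising an R-far $\tilde\delta$-cocycle as a subgerm-counting formula invariant under exactly the relevant generalised Reidemeister moves. The principal obstacle throughout is the sign bookkeeping in Part~2, which intertwines the $\chi w$ factors of $\partial_i$, the writhes supplied by $\T$ on bystander two-leaved polygons, and the consistency factors of $\tilde\delta$ across the various polygon-size subcases.
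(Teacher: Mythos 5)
Your overall architecture matches the paper's: Part~1 by importing the cancellation mechanism of Theorem~\ref{thm:maincpx} and observing the extra $\chi$-factors square away; Part~2 by a term-by-term correspondence between subgerms of splittings of $\gamma$ and subgerms of $\gamma$; Part~3 as a topological consequence of the Stokes formula together with Theorem~\ref{thm:top} and Lemma~\ref{lem:du}, followed by invoking \cite{GPV} and \cite{FT1cocycles}. Parts~1 and~3 are in line with the paper's argument.

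The gap is in Part~2, and it concerns exactly the point the paper singles out before setting up the correspondence. A splitting $\gamma_s$ contains, in addition to $\Gamma_1$ and $\Gamma_2$, a collection of $(k-1)(n-k)$ two-leaved graphs \emph{created by} the splitting. These graphs do not correspond to any polygon of $\gamma$, so if a subgerm $\sigma\subseteq\gamma_s$ retains one of them, ``shrinking $e(s)$ inside $\sigma$'' does \emph{not} produce a subgerm of $\gamma$, and your bijection is not well-defined. The paper disposes of this as follows: (i) for $k>2$, R-farness of $\alpha$ forces $\sigma$ to have discarded all such graphs, since otherwise $|\sigma|$ would be R-II close; (ii) for $k=2$, at most one may survive; and (iii) when none survives in the $k=2$ case the corresponding term is cancelled by the \emph{opposite} $2$-splitting (the one where the sliding branch is pushed to the other side), which is a genuine cancellation with no counterpart on the left-hand side. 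Your proposal does not distinguish the $k>2$ and $k=2$ cases, does not invoke R-farness to remove the created two-leaved graphs, and never addresses the opposite-splitting cancellation. This omission also contaminates your sign bookkeeping: the sentence ``the bystander two-leaved graphs of $\gamma_s$ persist unchanged as two-leaved polygons of $\tau$'' and the claim that ``bystander writhes contribute to $w(\sigma)$ but not to $w(\tau)$'' cannot both be correct depending on what ``bystander'' means, and the assertion that $\varepsilon_{L(s)}(e(s))=\varepsilon_L(e)$ ``by construction'' actually requires noting that the surplus polygons of $L(s)$ relative to $L$ are all even (hence invisible to $\sigma_{\mathrm{glo}}$), which only holds after establishing that those surplus polygons are precisely two-leaved graphs removed from $\sigma$. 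You should explicitly carry out the paper's preliminary reduction (one $k$ per $\alpha$; for $k>2$ the created two-leaved graphs are gone; for $k=2$ exactly one survives after cancelling zero-survivor pairs via the opposite splitting and Lemma~\ref{lem:epsi}) before the divide-and-conquer step.
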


\begin{remark}
The farness constraint could be lightened, by allowing R-III close diagrams. In the case $i=0$, this is harmless (there are no additional equations) thanks to  \cite[Lemma~$3.2$]{MortierPolyakEquations}, and it yields all GPV invariants \cite[Theorem~$3.6$]{MortierPolyakEquations}. For higher values of $i$, it would require to compute the proper $\varepsilon$ signs to associate with Type \tt~degeneracies, and to consider subgerms whose graphs are not necessarily trees. 

One could also think of removing the R-I,II farness condition -- by contrast, this would require to handle arbitrary geometric strata, resulting in a far more complicated story. For $i=0$ it is pointless, R-I,II farness is actually a necessary condition for cocyclicity \cite[Lemma~$3.4$]{MortierPolyakEquations}. For $i=1$ it brings no new cohomology classes \cite[Theorem~$2.11$]{FT1cocycles}.
\end{remark}

\begin{conjecture}\label{conj}
The image of the map $H_{d,\text{far}}^i \rightarrow H^i(\K\setminus \Sigma)$ consists of Vassiliev cohomology classes of degree at most $d$.
\end{conjecture}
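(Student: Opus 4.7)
The plan is to establish that any cohomology class $[\alpha]$ coming from an R-far cocycle $\alpha\in\tilde{\mathcal{A}}_{d,\text{far}}^i$ vanishes on cycles in $\K\setminus \Sigma$ that can be pushed into the complement of the $d$-th layer of Vassiliev's filtration on the simplicial resolution of $\Sigma$. Since Vassiliev cohomology classes of degree at most $d$ are, up to Alexander duality, exactly those classes dual to cycles supported in this layer, this equivalence would establish the conjecture.

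The first and most concrete step is to sharpen the bridge between combinatorial and geometric degree by examining the pairing $\<\!\<\alpha,\gamma\>\!\>$. Because $\I(\gamma)$ produces only subgerms with at most $\deg(\gamma)$ arrows, the pairing vanishes whenever $\deg(\gamma)<d$. Using the identity $\i(\gamma)=2\deg(\gamma)-\sharp\text{leaves}(\gamma)$, I would translate this into a codimension bound: only simple strata whose associated germ carries at least $d$ arrows can contribute, and their topological complexity (the number of imposed crossings) is at least $d$. Consequently the evaluation $\left\lbrace \alpha,c\right\rbrace$ only records intersections with strata lying deep enough in the Vassiliev filtration, which is exactly the expected characterisation of a degree-$\leq d$ class.

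The second step is to construct an explicit comparison map from the cocycles in $\tilde{\mathcal{A}}_{d,\text{far}}^\bullet$ to the $E_1$ page of Vassiliev's spectral sequence at filtration level $d$. A tree diagram $\alpha$ of Gauss degree $d$ is sent to the cochain that, on a cycle in the resolved stratum of $d$-fold singularities, integrates the arrow-counting formula with weights given by the canonical co-orientations from Theorem~\ref{thm:top}. The Stokes formula (Theorem~\ref{thm:main}, part 2) ensures that cocycles go to cocycles, since the $E_1$-differential is modelled precisely by the reduced meridian map $\partial_\gamma$. Compatibility with the map from Theorem~\ref{thm:main}, part 3, to $H^i(\K\setminus \Sigma)$ would then follow by unraveling the construction of the Vassiliev spectral sequence, with an induction on $d$ whose base cases $d=0,1$ are known from \cite{GPV} and \cite{FT1cocycles}.

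The main obstacle, as I see it, is twofold. First, one must make precise the comparison to the Vassiliev spectral sequence in the presence of geometric (non-simple) strata of types \du, \dd, \td~and \tt, which the combinatorial complex only indirectly records; R-farness disposes of types \dd~and \td, and Lemma~\ref{lem:du} handles \du, but \tt~degeneracies remain delicate because R-far cocyclicity imposes no equation near triple-crossing strata. The conjecture effectively asserts that the cohomological contributions of \tt~strata are automatically captured once the cocycle condition is imposed at lower codimension, and verifying this likely requires a careful analysis of how the triangle relations interact with the $\varepsilon$ signs that would have accompanied \tt~degeneracies. Second, controlling the Vassiliev filtration in higher codimension $i$ is inherently more subtle than in the invariant case ($i=0$), because the spectral sequence need not collapse at $E_1$; extension problems between layers would have to be resolved by showing that our combinatorial classes lift unambiguously, possibly by exhibiting explicit universal chains transverse to the filtration.
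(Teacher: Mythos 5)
This statement is labelled a \emph{conjecture} in the paper and is not proved there: the paper only records that it holds for $i=0$, over $\ZZ_2$ when $d=i+1$, and when $i=1$, $d=3$. There is therefore no proof in the paper to compare against, and your submission is, by your own framing, a plan rather than a proof --- you explicitly flag two unresolved obstacles at the end. As a result the proposal cannot be accepted as a proof of the statement.

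On the merits of the plan, the degree-counting observation in your first step is correct as far as it goes (the pairing $\<\!\<\alpha,\gamma\>\!\>$ with a degree-$d$ tree diagram $\alpha$ vanishes unless $\gamma$ has at least $d$ arrows), but it does not by itself control Vassiliev degree. Vassiliev degree is defined via a filtration on the simplicial resolution of the discriminant $\Sigma$, whereas the stratification you are bounding is the projection-induced one on $\K\setminus\Sigma$; the paper explicitly warns (just after Example~\ref{ex:Reid}) that these two stratifications should not be confused and that Vassiliev's is not used here. Establishing that a constraint of the form ``only simple strata with $\geq d$ crossings contribute'' translates into ``the class lives in filtration level $\leq d$ of Vassiliev's spectral sequence'' is precisely the content of the conjecture, so this part of the argument is circular as written. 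Your second step (building a comparison map to the $E_1$ page) is the right place to look, but it is stated only at the level of intent; the interaction with \tt\ strata, the extension problems you mention for $i\geq 1$, and the choice of chain-level models for the co-orientations from Theorem~\ref{thm:top} on the resolved discriminant are all left open. Those are exactly the difficulties that keep this a conjecture, so identifying them is a good sanity check, but none of them is addressed.
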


This is known to hold for $i=0$, and to hold over $\ZZ_2$ when $d=i+1$ (extreme case of diagrams with only one tree), and when $i=1$ and $d=3$ (case of the Teiblum-Turchin cocycle \cite{Turchin, Vassiliev}). 

\begin{proof}[Proof of Theorem~\ref{thm:main}]
$1.$ This follows from Theorem~\ref{thm:maincpx} after noticing that the additional contribution $\chi(\Gamma_v) \chi(\Gamma_w)$ always cancels itself out in $\tilde{\delta}\circ\tilde{\delta}$.\\

$2.$ For simplicity we omit the indices and exponents in the maps $\partial_i$ and $\tilde{\delta}_d^i$. We also may assume that $\alpha$ is a tree diagram and $\gamma$ a germ. 
Note that $\alpha$ cannot be a subdiagram of both a $k$-splitting and an $l$-splitting of $\gamma$ for $k\neq l$; so the proof can be split according to the at most unique value of $k$ such that the RHS stands a chance to be non-zero when $\partial (\gamma)$ is restricted to $k$-splittings. As a last preliminary, note that we prove the formula at the level of $\mathcal{A}_{d,\text{far}}^i$, i.e. \textit{before the quotient by triangle relations}. 

If $k>2$, then because $\alpha$ is R-far we see that any subdiagram of a term in $\partial (\gamma)$ that contributes non-trivially to the RHS must have gotten rid of every two-leaved graph that resulted from the splitting. Similarly, if $k=2$, at most one of these graphs may have survived. Also, if no one of them has survived, then the subdiagram's possible contribution is cancelled out by the corresponding subdiagram in the \emph{opposite} $2$-splitting (where the sliding branch has been pushed in the opposite direction).
Thus we see that for any value of $k$, we can restrict $\I(\partial (\gamma))$ to certain subdiagrams such that the corresponding subdiagrams of $\gamma$ are \textit{signed tree diagrams}.

We now use a divide and conquer trick. Note that the subdiagrams to which we restricted $\I(\partial (\gamma))$ have a well-defined preferred edge $e(s)$. So we can
arrange the non trivial contributions to the RHS according to which edge of $A$ corresponds to $e(s)$. This edge must clearly be admissible in $A$, so a corresponding arrangement can be realised in the LHS. 
Now it is easy to see that the contributions in each pack are naturally in $1$-$1$ correspondence, and that the signs match.\\

$3.$ The map $\alpha\mapsto \left\lbrace \alpha,\cdot \right\rbrace$ makes tree diagrams into cochains in $\K$. By Theorem~\ref{thm:top}, Lemma~\ref{lem:du} and the Stokes formula, it maps cocycles to cocycles and coboundaries to coboundaries, thus inducing a map $H_{d,\text{far}}^i \rightarrow H^i(\K\setminus \Sigma)$.

For $i=0$, the map $ \tilde{\delta}_d^1$ is isomorphic with the map $d^\Lambda$ from \cite{FT1cocycles} restricted to Gauss degree $d$, and this isomorphism is compatible with the Stokes formulas. There, it is proved that Goussarov-Polyak-Viro invariants are exactly the kernel of a certain map $d^{\Lambda} \oplus d^{\Delta}\oplus d^{\I} \oplus d^{\I \!\I}$, and our R-farness condition ensures that the diagrams live in the kernel of  $d^{\Delta}\oplus d^{\I} \oplus d^{\I \!\I}$.

For $i=1$, we use the result and terminology of \cite[Theorem~$2.11$]{FT1cocycles}. By our R-farness condition the condition of the theorem is satisfied, and also the \textit{cube equations} associated with \includegraphics [scale=0.6]{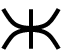}-strata are empty. Now it is straightforward to check that the tetrahedron equations associated with \includegraphics [scale=0.6]{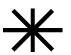}-strata yield the kernel of the map $ \tilde{\delta}_d^2$
restricted to edges that are bounded by one leaf from the triangle, and the remaining equations from \includegraphics [scale=0.6]{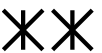}-strata are encoded by the restriction of $ \tilde{\delta}_d^2$ to the complementary set of edges.
Finally, considering the number of leaves in the polygons, the kernel of $ \tilde{\delta}_d^2$ is the intersection of the kernels of these two restrictions.
\end{proof}

\section{Examples and comments}

An essential aspect of our construction is that it is of a virtual nature. That is, the equations do not care about the fact that the germs at which we evaluate the bracket $\<\hspace*{-0.08cm} \<\alpha,\cdot \> \hspace*{-0.08cm}\> $ may or may not correspond to classical knots.
A major benefit is that it makes the theory simple and computable.
Taking care of classicalness would be much more complicated: to the best of our knowledge there is no complete characterisation of Gauss diagrams of classical knots that do not require to actually try drawing the knot -- although there are some powerful invariants allowing to detect non-classicalness in a lot of cases, using for instance the Gaussian parity \cite{ManturovParity}, the Miyazawa polynomial \cite{Naoko} or the arrow polynomial \citep{DyeKaufArrowPoly}. 

On the side of drawbacks, the map $H_{d,\text{far}}^i \rightarrow H^i_\text{Vassiliev}(\K\setminus \Sigma)$, assuming that Conjecture~\ref{conj} holds, 
is unlikely to be surjective. For instance, the Vassiliev invariant of order $3$ given by the  Gauss diagram formula $v_3$ in \cite[Theorem $2$]{GPV} cannot be found in $H_{3,\text{far}}^0$ (a virtual version of $v_3$ is constructed in \cite{PolyakChmutovHOMFLYPT}, but its non-homogeneity makes it of a strongly different nature). However, our cochain complex produces a formula for $v_3$, quite unexpectedly, not from $H_{3,\text{far}}^0$ but from $H_{3,\text{far}}^1$, by integrating a $1$-cocycle over the Fox-Hatcher loop. The following is a corollary of Theorem~\ref{thm:int}.

\begin{theorem}
The tree-diagram formula $\tilde{\alpha}_3^1$ on Fig.\ref{pic:a31} is an R-far $1$-cocycle. Moreover, the integration of $\tilde{\alpha}_3^1$ on the Gramain loop and the Fox-Hatcher loop of a knot $K$ yield respectively the Gauss diagram formulas:
$$\begin{array}{ccccl}
\int_{\rot(K)} \tilde{\alpha}_3^1  &=&
\<\hspace*{-0.1cm}\<  \raisebox{-0.39cm}{\includegraphics [scale=1]{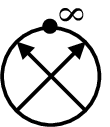}},K \>\hspace*{-0.1cm}\>  &=&
v_2(K),\\&&&&\\
\int_{\FH(K)} \tilde{\alpha}_3^1 &=&\<\hspace*{-0.1cm}\<  \raisebox{-0.35cm}{\includegraphics [scale=1]{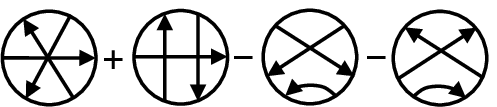}},K \>\hspace*{-0.1cm}\>  &=&
6v_3(K)-w(K)v_2(K),
\end{array}$$
where $w(K)$ denotes the blackboard framing of the diagram of $K$ considered. In particular the map $H_{3,\text{far}}^1 \rightarrow H^1(\K\setminus \Sigma)$ has rank at least $1$. If Conjecture~\ref{conj} holds, then this rank is $1$ and $\tilde{\alpha}_3^1$ is a realisation of the Teiblum-Turchin cocycle over the integers.
\end{theorem}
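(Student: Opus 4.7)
The plan is to break the statement into three essentially independent tasks: verify that $\tilde{\alpha}_3^1$ lives in the kernel of $\tilde{\delta}^2_3$ and is R-far, compute the two loop integrals by invoking Theorem~\ref{thm:int}, and finally deduce the statement about the rank of the comparison map.

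First, for cocyclicity, I would apply the coboundary formula
\[
\tilde{\delta}_3^2(A)=\sum_{e\text{ admissible}}\chi(\Gamma_v)\chi(\Gamma_w)\,\varepsilon_L(e)\cdot A_e
\]
to every generator appearing in $\tilde{\alpha}_3^1$. At Gauss degree $3$ and cohomological degree $1$, each generator has only four leaves partitioned as $2+2$, so the list of admissible edges is short and the resulting degree-$2$ tree diagrams all sit in $\tilde{\mathcal{A}}_3^2$, which is finitely generated. After rewriting the output in a basis of monotonic diagrams via the triangle relation (Definition~\ref{def:tri}), one checks cancellation coefficient by coefficient. R-farness is then verified by inspection: each diagram in $\tilde{\alpha}_3^1$ must fail the three conditions of Definition~\ref{def:far}, which is a finite combinatorial check on the configurations of neighbours and orderings.

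Second, for the two integrals, the natural route is to feed $\tilde{\alpha}_3^1$ into the promised Theorem~\ref{thm:int}, which should produce closed combinatorial formulas for $\int_{\rot(K)}$ and $\int_{\FH(K)}$ out of any R-far $1$-cocycle. The output is a Gauss-diagram sum pairing, by the construction of $\{\cdot,\cdot\}$, with signed intersections of the relevant loop with the simple $1$-strata of $\K\setminus\Sigma$. For the Gramain loop one expects only R-III strata involving each unordered pair of arrows of the diagram of $K$ to contribute, and after accounting for writhes and co-orientations (Theorem~\ref{thm:top}) the signed count should collapse to the familiar Polyak--Viro formula for $v_2$ shown in the theorem. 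For the Fox-Hatcher loop the list of simple $1$-strata encountered during one cyclic reparametrisation is larger: one has to bookkeep the triple-crossings that appear as the marked point sweeps across the diagram, plus the R-I moves absorbed into the blackboard framing correction $-w(K)v_2(K)$. Matching the result against the closed Gauss-diagram formula for $6v_3$ from \cite{GPV} finishes the identification. I expect this second step to be the technical heart of the argument: the bookkeeping for $\FH(K)$ is exactly where the non-trivial combinatorics lives, and where the cancellations between the signs $\chi$, $\varepsilon$, and the co-orientation of each stratum must be reconciled with the known expansion $6v_3-wv_2$.

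Third, with the two integration formulas in hand, the rank assertion is immediate: take any $K$ with $v_2(K)\neq 0$ (for instance a trefoil); then $\int_{\rot(K)}\tilde{\alpha}_3^1\neq 0$, so the class of $\tilde{\alpha}_3^1$ in $H_{3,\text{far}}^1$ has non-trivial image in $H^1(\K\setminus\Sigma)$, giving rank at least $1$. Under Conjecture~\ref{conj}, any class in the image is Vassiliev of order $\leq 3$, and by Vassiliev's computation \cite{Vassiliev,Turchin} the space of order $\leq 3$ cohomology in degree $1$ is one-dimensional, generated by the Teiblum--Turchin class; so rank $1$ is sharp and $\tilde{\alpha}_3^1$ is an integral representative. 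The main obstacle remains the explicit evaluation at the Fox-Hatcher loop, which is precisely the sort of calculation that Theorem~\ref{thm:int} is designed to automate, so most of the work should already be packaged there.
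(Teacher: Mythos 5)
Your decomposition --- verify R-farness and cocyclicity by a finite check on $\tilde{\alpha}_3^1$, apply Theorem~\ref{thm:int} to produce the two integrals, then deduce the rank assertion from $v_2(\text{trefoil})\neq 0$ together with (under Conjecture~\ref{conj}) the one-dimensionality of order-$\leq 3$ degree-$1$ Vassiliev cohomology --- matches the paper, which introduces this theorem only with the sentence \enquote{The following is a corollary of Theorem~\ref{thm:int}} and gives no further proof. One caution on your narrative of the mechanism: the two integrals are produced by the purely \emph{formal} sign rules $\int_{\rot}^h+\int_{\rot}^l$ and $\int_{\FH}$ applied to the triangles in the generators of $\tilde{\alpha}_3^1$ (Figs.~\ref{pic:introt} and \ref{pic:inthat}), not by a direct stratum-count indexed by unordered pairs of arrows of $K$, and the $-w(K)v_2(K)$ term is the framing ambiguity $\FH(K)\mapsto\FH(K)-\rot(K)$ combined with the first integral rather than an R-I correction; these are glosses in your exposition rather than errors in the logic, and they do not affect the substance of the argument.
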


As far as we know, this is the first time a Gauss diagram formula specific to classical knots is found without using Gauss diagram identities (see \cite{Ostlund}). The only step where we did leave the comfortable field of virtual arguments is when we used the existence of the Fox-Hatcher loop!

Note that the second formula is unbased -- which is a general phenomenon when integrating over the Fox-Hatcher loop. The evaluation bracket is then defined similarly to the based version, but it counts subdiagrams with multiplicity, that is given by the order of their symmetry group (see \cite[Sections~$2.2$ and $2.4$]{Ostlund}, and \cite[Section~$4.1.2$]{VKTG}).

\begin{figure}[h!]
\centering 
\psfig{file=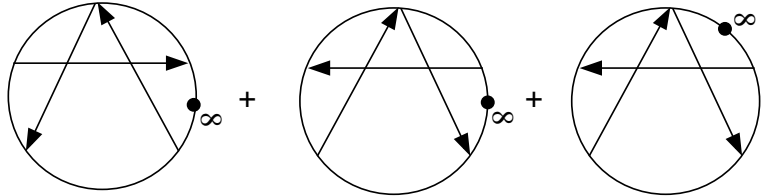, scale=0.65}
\caption{The $1$-cocycle $\tilde{\alpha}_3^1$.} \label{pic:a31}
\end{figure}

\subsection{Formal integration of $1$-cocycles}

A deep result due to Hatcher~\cite{HatcherTopologicalmoduli} states that the connected component of $\K\setminus\Sigma$ corresponding to a non-satellite long knot $K$ has the homotopy type of $\SS^1$ if $K$ is a torus knot, and of $\SS^1\times \SS^1$ if $K$ is hyperbolic. For those knots there are essentially two interesting elements in $H_1(\K_K)$: the \textit{Gramain loop} $\rot(K)$, which consists of a rotation of a long knot around its axis, and the \textit{Fox rolling}, or \textit{Hatcher loop} $\FH(K)$, which consists of sliding the ball at infinity (in $\SS^3$) along the knot.

The Gramain loop does not depend on the Reidemeister moves we use to represent it. However, the Fox-Hatcher loop depends on a framing choice: indeed, each time one adds $+1$ to the framing of $K$, the ball at infinity makes one positive full spin on itself, which amounts to a negative spin of $K$, hence it adds $-\rot(K)$ to $\FH(K)$.

\begin{figure}[h!]
\centering 
\psfig{file=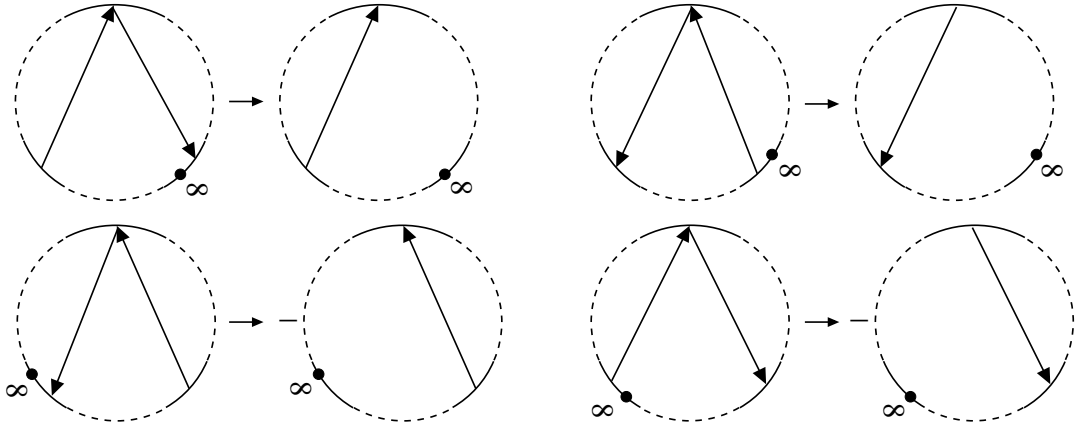, scale=0.65}
\caption{Sign rules for $\int_{\rot}^h$ (on the left) and $\int_{\rot}^l$ (on the right).} \label{pic:introt}
\end{figure}
\begin{figure}[h!]
\centering 
\psfig{file=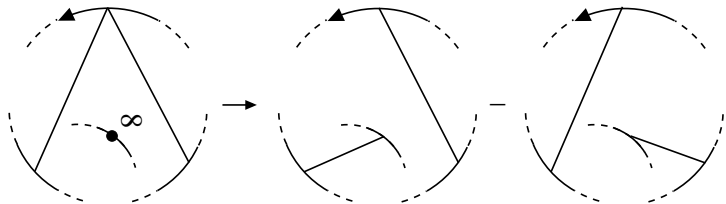, scale=0.65}
\caption{Sign rule for $\int_{\FH}$. It does not depend on the position of the point $\infty$.} \label{pic:inthat}
\end{figure}

Let $A$ be a monotonic tree diagram of codimension $1$ -- so that its only polygon with more than $2$ leaves is a triangle $T$. If the highest (resp. lowest) point of this triangle with respect to the order $<_A$ is also the lowest (resp. highest) of all leaves in $A$ with respect to the $\RR$ order, then we define a new diagram $\int_{\rot}^h A$ (resp. $\int_{\rot}^l A$) of codimension $0$, by forgetting the arrow containing that point, with sign rule as indicated on Fig.\ref{pic:introt}. Otherwise, we set $\int_{\rot}^h A=0$ (resp. $\int_{\rot}^l A=0$). This defines linear  maps $\tilde{\mathcal{A}} _d^1 \rightarrow \tilde{\mathcal{A}} _{d-1}^0$.

With the same notations, let $(a,b)$ and $(b,c)$ denote the two arrows of $T$. We construct two unbased diagrams by replacing the arrow $(a,b)$ with $(a,\infty)$ (resp. $(b,c)$ with $(\infty,c)$) while \textit{forgetting the point $\infty$}, and give them signs depending only on the relative position of $a$, $b$ and $c$ in the cyclic order -- see the rule on Fig.\ref{pic:inthat}. The difference is denoted by $\int_{\FH} A$ and defines a map $\tilde{\mathcal{A}} _d^1 \rightarrow \tilde{\mathcal{A}} _d^0$.

\begin{theorem}\label{thm:int}
Let $\alpha\in \tilde{\mathcal{A}}_ {d,\far }^1 \cap \Ker \delta_d^2$. Then for any classical knot $K$:
\begin{enumerate}
\item
$$
\begin{array}{ccc}
\int_{\rot(K)}\alpha &=& \<\hspace*{-0.1cm}\<  \int_{\rot}^h \alpha + \int_{\rot}^l \alpha ,K \>\hspace*{-0.1cm}\>. \end{array}
$$ 
In particular, the right-hand side defines a finite-type invariant of $K$ of degree at most $d-1$. However, $\int_{\rot}^h \alpha + \int_{\rot}^l \alpha$ might not lie in $\Ker \delta_{d-1}^1$.
\item $$
\begin{array}{ccc}
\int_{\FH(K)}\alpha &=& \<\hspace*{-0.08cm}\<  \int_{\FH} \alpha ,K \>\hspace*{-0.08cm}\>. \end{array}
$$ 
The right-hand side defines a \emph{regular} invariant of $K$. Its value on a diagram of $K$ with trivial blackboard framing defines a finite-type invariant of $K$ of degree at most $d$. [Recall that here the bracket on the right counts subdiagrams with their potential multiplicity due to symmetry.]
\end{enumerate}
\end{theorem}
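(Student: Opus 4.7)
The plan is to reduce each integration to a combinatorial count over the Reidemeister III events encountered along the loop. Since $\alpha$ is R-far and a $1$-cocycle, Lemma~\ref{lem:du} together with Theorem~\ref{thm:top} (and the Stokes formula of Theorem~\ref{thm:main}) imply that only simple R-III strata contribute algebraically to $\int_\ell\alpha$ for any generic loop $\ell$: R-I and R-II strata are avoided by farness of $\alpha$, the $2$-$1$ (tangency) strata contribute trivially by Lemma~\ref{lem:du}, and for all other non-simple strata the cocycle condition $\tilde{\delta}\alpha=0$ causes the contributions to cancel. Therefore $\int_\ell\alpha$ equals a signed sum, indexed by the R-III events along a representative diagram sequence for $\ell$, of the evaluations $\<\hspace*{-0.1cm}\<\alpha,\gamma\>\hspace*{-0.1cm}\>$ on the corresponding $3$-crossing germs $\gamma$, each weighted by the sign of the canonical co-orientation from Theorem~\ref{thm:top}.

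For part $(1)$, I would enumerate the R-III events along a full Gramain rotation of a fixed planar diagram $D$ of $K$. Up to isotopy, the only such events occur when the extreme top, or extreme bottom, strand of $D$ sweeps through an existing double crossing. A subgerm of the resulting $3$-germ $\gamma$ contributes non-trivially to $\<\hspace*{-0.1cm}\<\alpha,\gamma\>\hspace*{-0.1cm}\>$ only when the triangle $T$ of the underlying monotonic tree diagram $A$ sits at that very triple crossing, with one of its leaves being the leaf of $A$ that is extremal in $\RR$; and the $<_T$-position of that leaf (top of $T$ for a bottom sweep, bottom of $T$ for a top sweep) determines which of $\int_\rot^h A$, $\int_\rot^l A$ the contribution is allocated to, with the remaining part of $A$ inherited by deletion of the extremal arrow. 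Matching the signs in Fig.~\ref{pic:introt} against the canonical co-orientation reduces, via Lemma~\ref{lem:epsi}, to a finite check involving $\varepsilon_L(e(s))$ for $2$-splittings of an R-III germ. The finite-type statement then follows automatically: the right-hand side is a knot invariant (because the left-hand side is), and a Gauss diagram formula of degree $d-1$ evaluated on classical knots yields a finite-type invariant of degree at most $d-1$.

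For part $(2)$, the Fox-Hatcher loop slides the point $\infty$ along the knot, and the generic R-III events are exactly those taking place in a small arc neighborhood of $\infty$. For a contributing subgerm $T$ of the $3$-germ, either the leaf identified with $\infty$ plays the role of the ``head'' of the top arrow $(a,b)$ or the ``tail'' of the bottom arrow $(b,c)$; in the first case this replaces $(a,b)$ by $(a,\infty)$ and in the second $(b,c)$ by $(\infty,c)$, with $\infty$ then deleted from the diagram. The two cases, with the appropriate sign given by the cyclic position of $a,b,c$, recover the two terms of $\int_\FH A$ from Fig.~\ref{pic:inthat}. The loop ranges $\infty$ through the entire diagram, which is what forces the evaluation on the right-hand side to be unbased and to count symmetric subdiagrams with multiplicity. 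Since the Fox-Hatcher loop depends on the framing through $\FH(K)=\FH_0(K)-w(K)\rot(K)$, the resulting cochain is invariant under R-II and R-III only, i.e.\ \emph{regular}; evaluating on a diagram of trivial blackboard framing eliminates the framing correction and yields a Vassiliev invariant of degree at most $d$.

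The main obstacle is the sign bookkeeping: one must verify that the canonical co-orientation of each R-III stratum, expressed through $\chi w(\Gamma_1)\chi w(\Gamma_2)\varepsilon_{L(s)}(e(s))$ from Theorem~\ref{thm:top} and Lemma~\ref{lem:epsi}, agrees in every local configuration with the simple combinatorial rules of Figs.~\ref{pic:introt} and~\ref{pic:inthat}. This is a finite case analysis, parametrised by which strand is extremal (for $\rot$) or adjacent to $\infty$ (for $\FH$) and by the cyclic order of the three branches; the computation is manageable because only the local contributions of $\lk$, $\E$ and $\chi$ near the triangle survive, all global contributions $\sigma_{\text{glo}}$ being absorbed into the overall sign of the co-orientation. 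A secondary but important point in $(2)$ is to check that the symmetry-induced multiplicities of the unbased bracket $\<\hspace*{-0.1cm}\<\cdot,\cdot\>\hspace*{-0.1cm}\>$ correctly count how many times a given R-III event is encountered during the full Fox-Hatcher rotation.
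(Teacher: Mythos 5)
Your proposal follows the same route as the paper's own (very terse) proof: reduce the integral to a signed sum over the Reidemeister-III events encountered along Fiedler's model of the Gramain loop and Fox's model of the rolling loop, then match the canonical co-orientation of each R-III stratum against the sign rules of Figs.~\ref{pic:introt} and~\ref{pic:inthat}, closing with the GPV degree bound. One minor imprecision is your opening paragraph: R-I and R-II strata do not appear in $\left\lbrace \alpha,\ell \right\rbrace$ simply because they are not \emph{simple} strata, and codimension-two tangency strata are not met by a generic loop at all -- R-farness (via Lemma~\ref{lem:du} and the Stokes formula) is what makes $\left\lbrace \alpha,\cdot \right\rbrace$ a well-defined cohomology class and, as the paper stresses, what keeps the R-III contributions to the two integrals tractable.
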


This theorem can be proved by analysing the presentation of $\rot$ from \cite[Fig.$144$]{FiedlerQuantum1cocycles}, and that of $\FH$ given by Fox \cite{FoxRolling} from the viewpoint of Gauss diagrams -- as in the proof of \cite[Theorem~$3.3$]{FT1cocycles}. Reidemeister farness is crucial in the proof, not only for the theory to work properly, but to have a good control of the non-trivial contributions to the integrals. For example, the $1$-cocycle formula from \cite[Theorem $3.2$]{FT1cocycles}, which allows R-III close diagrams, is impossible [to us] to integrate directly on the Fox-Hatcher loop, because of uncontrollable contributions.

\subsubsection*{Gauss diagram identities}

This theorem can be useful even when applied to a cocycle that is trivial in $H^1(\K\setminus \Sigma)$. Indeed, it may happen that \textit{the integration of such a cocycle is not formally zero}. When this happens, it means that we have found a Gauss diagram identity, that is, a formula for the trivial invariant. But since there are no such formulas for virtual knots, we have there a non-trivial obstruction to classicalness.

Among the low-degree examples, we have thereby a new proof that the Gauss diagram formulas
$$
\<\hspace*{-0.08cm}\<   \raisebox{-0.39cm}{\includegraphics[scale=1]{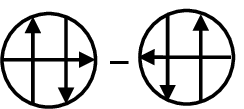}}\,,K \>\hspace*{-0.08cm}\> 
\text{ and }
\<\hspace*{-0.08cm}\<   \raisebox{-0.6cm}{\includegraphics[scale=1]{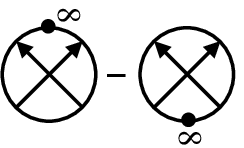}}\,,K \>\hspace*{-0.08cm}\> 
$$
vanish for classical knots.

\subsection{Higher degree examples}

A number of higher degree formulas comes for free as in general  $\tilde{\mathcal{A}}_{i+1,\text{far}}^i \cong H_{i+1,\text{far}}^i$, whose rank grows at least quadratically with $i$. All of those can be proved to be Vassiliev classes at least over $\ZZ_2$ using the homological calculus from \cite{Vassiliev}. One could study their non-triviality by using the results of \cite{BudneyHomotopyType, BudneyHomology} which are an excellent sequel, state of the art and completion to Hatcher's work on the topology of spaces of knots. We study here the cocycles in $H_{3,\text{far}}^2$.

Our main motivation for computing higher degree examples lies in reinterpreting a result of Budney et al. \cite{BudneyConantScanellSinha}, which states that it is possible to compute the invariant $v_2$ by counting an appropriate kind of quadrisecants with appropriate signs.

In the present language, a quadrisecant of a knot is a particular direction of projection for which the knot respects a germ with one polygon and four leaves. Hence, counting quadrisecants with signs is precisely what $2$-cocycles in $H_{3,\text{far}}^2$ do. More precisely, given a knot $K$,
consider a sphere in $\RR^3$, centered at the origin and with radius large enough to intersect $K$ only in two points where it is arbitrarily close to its axis.
Each point in that sphere defines a different direction of projection,  except for the two intersection points with the axis of $K$.  So we do not have a $2$-cycle, but still a canonical $2$-chain, where evaluating our cocycles makes sense since generically the quadrisecants stay far away from the knot axis during an isotopy of $K$. We call that $2$-chain $\SS_\infty(K)$.

The module $\tilde{\mathcal{A}}_{3,\text{far}}^2$ has two generators $v_3^2=$ \raisebox{-0.39cm}{\includegraphics [scale=1]{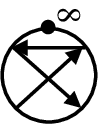}} and $\tilde{v}_3^2=$ \raisebox{-0.39cm}{\includegraphics [scale=1]{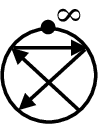}}, and both are cocycles. 

\begin{theorem}
For any knot $K$,
$$v_2(K)=\<\hspace*{-0.08cm}\<   v_3^2,\SS_\infty(K) \>\hspace*{-0.08cm}\> 
=\sum_{\includegraphics[scale=1]{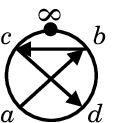}} w(a,b)w(c,d),
$$
where the sum is over all quadrisecants of $K$ of the indicated type, and $w(a,b)$ denotes the writhe of the simple crossing between the branches $a$ and $b$.
\end{theorem}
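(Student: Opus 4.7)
My plan proceeds in three steps: establishing cocyclicity, unpacking the bracket as a signed quadrisecant count, and identifying this count with $v_2$.

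Cocyclicity of $v_3^2$ is automatic. Since $\mathcal{A}_3^3 = 0$ (by the triviality bound $i > d-1$), the map $\tilde{\delta}_3^3$ vanishes identically, so $v_3^2 \in \ker \tilde{\delta}_3^3$, and by Theorem~\ref{thm:main}.3 it induces a cohomology class in $H^2(\K \setminus \Sigma)$ via $\alpha \mapsto \{\alpha, \cdot\}$.

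Next I would unpack the bracket $\<\<v_3^2, \SS_\infty(K)\>\>$. The intersections of $\SS_\infty(K)$ with simple $2$-strata are exactly the quadrisecants of $K$: isolated projection directions at which four branches of $K$ project to a single $4$-valent vertex. For each quadrisecant $Q$, the germ $\gamma_Q$ has one $4$-leaf polygon with six signed arrows (the writhes of the pairwise simple crossings), and the contribution of $Q$ to the bracket is $\eta_Q \cdot \<v_3^2, \T \circ \I(\gamma_Q)\>$, where $\eta_Q \in \{\pm 1\}$ is the intersection number produced by the canonical co-orientation of $\K_{\gamma_Q}$ (Theorem~\ref{thm:top}). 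The subgerms of $\gamma_Q$ surviving $\I$ are the signed spanning trees on its four leaves; since the polygon has more than two leaves, $\T$ erases the signs, so $\<v_3^2, \T \circ \I(\gamma_Q)\>$ equals $\pm 1$ precisely when $\gamma_Q$'s complete graph admits a spanning tree of the shape of $v_3^2$ -- the quadrisecants ``of the indicated type'' -- and zero otherwise. Tracing the recursive construction of $\phi_{\gamma_Q}$ along the two $2$-splittings at the extremal arrows of this spanning tree then expresses $\eta_Q$, up to the sign $\<v_3^2, \T \circ \I(\gamma_Q)\>$, as exactly $w(a,b) w(c,d)$. Summing reduces the bracket to $\sum_Q w(a,b) w(c,d)$.

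To identify this quadrisecant sum with $v_2(K)$ I would invoke the formula of Budney--Conant--Scannell--Sinha \cite{BudneyConantScanellSinha}, which already writes $v_2$ as an analogous signed quadrisecant count; matching on the positive trefoil (where $v_2 = 1$) fixes the overall normalization. As a self-contained alternative, one can show directly that $\<\<v_3^2, \SS_\infty(K)\>\>$ is isotopy-invariant (apply the Stokes formula of Theorem~\ref{thm:main}.2 to the $3$-chain $\bigcup_t \SS_\infty(K_t)$ traced out by an isotopy, using that $v_3^2$ is a cocycle), vanishes on the unknot (no quadrisecants), and defines a finite-type invariant of degree at most $2$ whose value on the trefoil can be computed by hand.

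The main obstacle is the sign computation in step two. Faithfully tracking $\eta_Q$ through the recursive definition of $\phi_{\gamma_Q}$ requires marrying the auxiliary $\chi w$ signs of the intervening splittings with the $\varepsilon_L$ combinatorics of Section~\ref{sec:leaf} and cross-referencing with the extrinsic orientation of $\SS_\infty(K)$ at $Q$. Keeping these three sign conventions consistent, so that the final answer comes out cleanly as $w(a,b) w(c,d)$, is where I expect the argument to require the most care.
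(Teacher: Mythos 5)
Your high-level plan matches the paper's: first prove the second equality by identifying the intersections of $\SS_\infty(K)$ with simple $2$-strata as quadrisecants, work out the contribution of each, and then deduce the first equality by a degree-$\leq 2$ finite-type argument plus a trefoil check. Your cocyclicity observation ($\tilde{\mathcal{A}}_3^3 = 0$ forces $v_3^2$ to be a cocycle) is correct and is implicit in the paper.

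However, the proof is not complete, and the gap is exactly the step you flag as the ``main obstacle.'' You assert that tracing the recursive construction of $\phi_{\gamma_Q}$ ``then expresses $\eta_Q$, up to the sign $\<v_3^2, \T \circ \I(\gamma_Q)\>$, as exactly $w(a,b)w(c,d)$,'' but this assertion is the content of the theorem and is not carried out. Moreover, computing the abstract co-orientation of $\K_{\gamma_Q}$ via the recursion for $\phi_\gamma$ is only half the task: one must also relate it to the \emph{extrinsic} orientation of the sphere $\SS_\infty$ at the quadrisecant, and these two sign conventions live in rather different worlds. The paper sidesteps the recursion entirely. It first packages the co-orientation from Theorem~\ref{thm:top} as ``the meridian plane in Fig.~\ref{pic:coor} is counterclockwise iff $pqr = +1$,'' and then proves a geometric \textbf{Fact} about how the triple-point locus $X \subset \SS_\infty$ meets the tangent plane (the tangent direction of $X$ lies in the angular sector bounded by $v_1, v_3$ not containing $v_2$), which lets it draw the local picture of the meridian structure directly on $\SS_\infty$ (Fig.~\ref{pic:sinfty}). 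The conclusion $pqr \cdot q = pr$ then falls out by inspection. Without some such direct geometric link between $\SS_\infty$ and the meridian structure, the recursive $\phi_\gamma$ route would not by itself produce the clean $w(a,b)w(c,d)$.

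One more caution on your ``self-contained alternative'' for the first equality: $\SS_\infty(K)$ is a $2$-chain with nonempty boundary (the two points where $K$ meets its asymptotic sphere are excluded), so the $3$-chain $\bigcup_t \SS_\infty(K_t)$ has boundary pieces beyond $\SS_\infty(K_1) - \SS_\infty(K_0)$. The paper's remark that quadrisecants ``stay far away from the knot axis'' is the reason these extra boundary pieces contribute nothing, but that genericity argument needs to be said; a bare application of Stokes is not enough. The paper avoids this by instead reading degree $\leq 2$ directly off the explicit formula it has just derived.
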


One can see that this is a new point of view on \cite[Proposition~$6.2$]{BudneyConantScanellSinha}, with a much simpler formula to think of. Indeed, the quadrisecants counted by $v_3^2$ are precisely those which \enquote{determine the cycle $(1342)$} in the language of \cite [Section~$6$]{BudneyConantScanellSinha}.

\begin{proof}
We begin with the second equality. It is proved by analysing the co-orientation defined by $v_3^2$ and understanding what orientation it defines on $\SS_\infty(K)$. The natural co-orientation of the plane on Fig.\ref{pic:coor}, as defined by Theorem~\ref{thm:top}, is counterclockwise if and only if the product of writhes $pqr$ is $+1$.

\begin{figure}[h!]
\centering 
\psfig{file=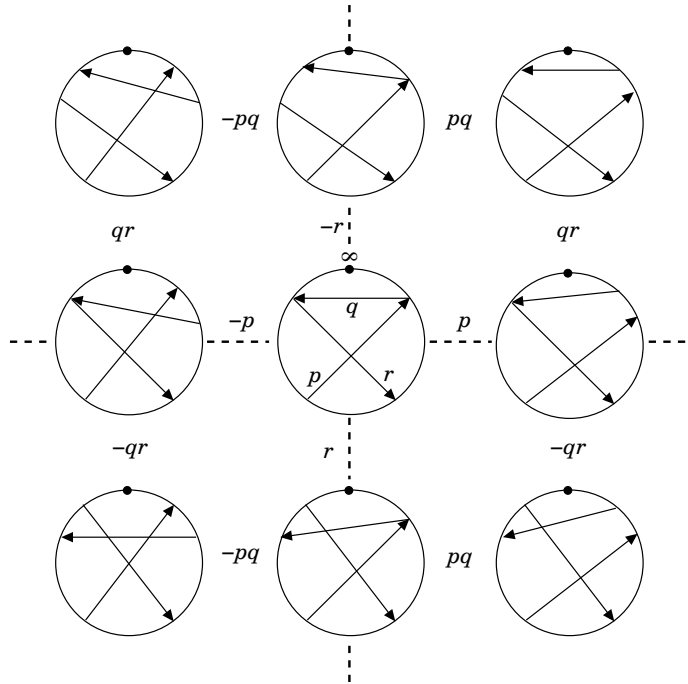, scale=1}
\caption{The meridian of a germ with underlying tree diagram $v_3^2$. The numbers $p$, $q$, $r$ are the writhes of the arrows as indicated in the middle diagram. A sign between two diagrams indicates the co-orientation.} \label{pic:coor}
\end{figure}

 Now we need to draw the picture of Fig.\ref{pic:coor} on the sphere $\SS_\infty$. For this, observe the following. Choose a point $x$ on $\SS_\infty$ which defines a diagram $K_x$ with exactly one generic triple point, say $f(t_1)> _x f(t_2)>_x f(t_3)$; the set of such points is a $1$-submanifold $X\subset\SS _\infty$. By moving the center of $\SS_\infty$ so that it lies on the line containing the triple point, the derivatives $f^\prime(t_1)$, $f^\prime(t_2)$ and $f^\prime(t_3)$ project to a generic triple of vectors $v_1, v_2, v_3\in T_x \SS_\infty$.

\textbf{Fact.} The direction of $T_x X$ lies in the angular region determined by the directions of $v_1$ and $v_3$ that does not contain the direction of $v_2$.

To see it, think of the top and bottom branches as locally spiraling around the medium branch.

Fig.\ref{pic:sinfty} reads like this. Independently of the direction of the furthest branch ($4$), we know that the branch of $X$ that slides $4$ away and keeps the triple point $\left\lbrace 1,2,3\right\rbrace$ lies in the region bounded by $1$ and $3$ that does not contain $2$. Also, it appears that the orientation $p/-p$ (defined by the middle horizontal line of Fig.\ref{pic:coor}) depends only on the orientation of the branch $3$ as indicated.
It is then easy to see that the splitting of the remaining triple point is supported by the direction $2$, and  that the orientation $qr/-qr$ depends only on the orientation of the branch $2$.

To conclude, the relative position of $p$ and $qr$ on Fig.\ref{pic:sinfty} is dictated by the sign $q$ (writhe of the crossing between branches $2$ and $3$). Hence the orientation induced on $\SS_\infty$ by $v_3^2$ is dictated by the sign $pqr\cdot q=pr$, which is the result announced.
\begin{figure}[h!]
\centering 
\psfig{file=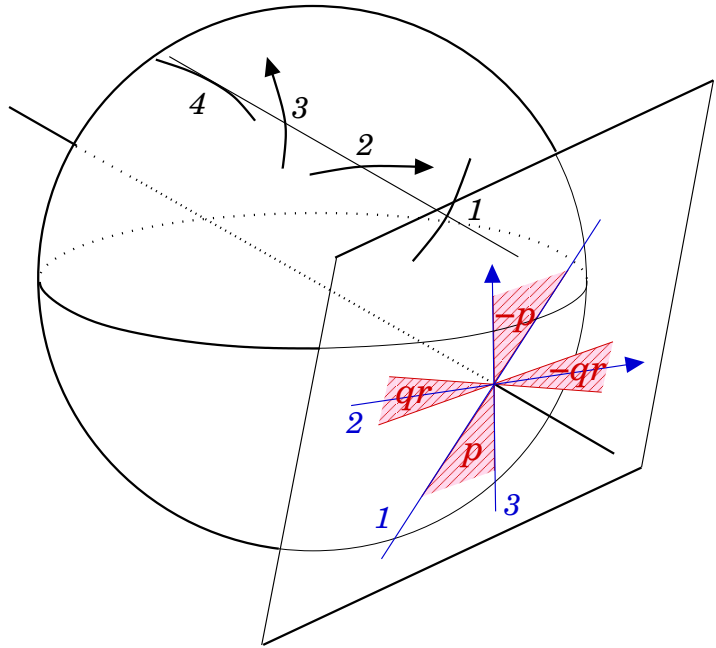, scale=0.9}
\caption{Orientation of $\SS_\infty$ induced by $v_3^2$} \label{pic:sinfty}
\end{figure}

Using this formula, it is straightforward to see that $\<\hspace*{-0.08cm}\<   v_3^2,\SS_\infty(K) \>\hspace*{-0.08cm}\> 
$ is a Vassiliev invariant of degree at most $2$; therefore it suffices to check the first equality for the trefoil.
\end{proof}

\bibliographystyle{plain}
\bibliography{bibli}

\begin{thebibliography}{10}

\bibitem{BudneyHomotopyType}
Ryan Budney.
\newblock Topology of knot spaces in dimension 3.
\newblock {\em Proc. Lond. Math. Soc. (3)}, 101(2):477--496, 2010.

\bibitem{BudneyHomology}
Ryan Budney and Fred Cohen.
\newblock On the homology of the space of knots.
\newblock {\em Geom. Topol.}, 13(1):99--139, 2009.

\bibitem{BudneyConantScanellSinha}
Ryan Budney, James Conant, Kevin~P. Scannell, and Dev Sinha.
\newblock New perspectives on self-linking.
\newblock {\em Adv. Math.}, 191(1):78--113, 2005.

\bibitem{PolyakChmutovHOMFLYPT}
Sergei Chmutov and Michael Polyak.
\newblock Elementary combinatorics of the {HOMFLYPT} polynomial.
\newblock {\em Int. Math. Res. Not. IMRN}, (3):480--495, 2010.

\bibitem{David}
J.~M.~S. David.
\newblock Projection-generic curves.
\newblock {\em J. London Math. Soc. (2)}, 27(3):552--562, 1983.

\bibitem{DyeKaufArrowPoly}
H.~A. Dye and Louis~H. Kauffman.
\newblock Virtual crossing number and the arrow polynomial.
\newblock {\em J. Knot Theory Ramifications}, 18(10):1335--1357, 2009.

\bibitem{FiedlerQuantum1cocycles}
Thomas {Fiedler}.
\newblock {Quantum one-cocycles for knots}.
\newblock {\em ArXiv Mathematics e-prints}, April 2013.

\bibitem{Fiedler1parameter}
Thomas Fiedler and Vitaliy Kurlin.
\newblock A 1-parameter approach to links in a solid torus.
\newblock {\em J. Math. Soc. Japan}, 62(1):167--211, 2010.

\bibitem{FoxRolling}
R.~H. Fox.
\newblock Rolling.
\newblock {\em Bull. Amer. Math. Soc.}, 72:162--164, 1966.

\bibitem{GPV}
Mikhail Goussarov, Michael Polyak, and Oleg Viro.
\newblock Finite-type invariants of classical and virtual knots.
\newblock {\em Topology}, 39(5):1045--1068, 2000.

\bibitem{HatcherTopologicalmoduli}
Allen {Hatcher}.
\newblock {Spaces of Knots}.
\newblock {\em ArXiv Mathematics e-prints}, September 1999.

\bibitem{Naoko}
Naoko Kamada.
\newblock An index of an enhanced state of a virtual link diagram.
\newblock {\em Hiroshima Mathematical Journal}, 37(3):409--429, 11 2007.

\bibitem{ManturovParity}
V.~O. Manturov.
\newblock Parity in knot theory.
\newblock {\em Mat. Sb.}, 201(5):65--110, 2010.

\bibitem{MortierPolyakEquations}
Arnaud Mortier.
\newblock Polyak type equations for virtual arrow diagram invariants in the
  annulus.
\newblock {\em J. Knot Theory Ramifications}, 22(07):1350034, 2013.

\bibitem{FT1cocycles}
Arnaud {Mortier}.
\newblock {Finite-type 1-cocycles of knots given by Polyak-Viro formulas}.
\newblock {\em ArXiv e-prints}, March 2014.

\bibitem{VKTG}
Arnaud {Mortier}.
\newblock {Virtual knot theory on a group}.
\newblock {\em ArXiv e-prints}, March 2014.

\bibitem{Ostlund}
Olof-Petter {\"{O}}stlund.
\newblock A combinatorial approach to {V}assiliev knot invariants.
\newblock U.U.D.M. Project Report, 1996:P7.

\bibitem{PolyakTalk}
Michael Polyak.
\newblock Talk at {S}wiss {K}nots 2011 -- \enquote{$3$ stories about
  \includegraphics[scale=1]{}}.
\newblock \url{http://drorbn.net/dbnvp/SK11_Polyak.php}.
\newblock Videography by Pierre Dehornoy and Dror Bar-Natan.

\bibitem{P1}
Michael Polyak.
\newblock On the algebra of arrow diagrams.
\newblock {\em Lett. Math. Phys.}, 51(4):275--291, 2000.

\bibitem{PolyakViro}
Michael Polyak and Oleg Viro.
\newblock {G}auss diagram formulas for {V}assiliev invariants.
\newblock {\em Internat. Math. Res. Notices}, (11):445ff., approx.\ 8 pp.\
  (electronic), 1994.

\bibitem{PVCasson}
Michael Polyak and Oleg Viro.
\newblock On the {C}asson knot invariant.
\newblock {\em J. Knot Theory Ramifications}, 10(5):711--738, 2001.
\newblock Knots in Hellas '98, Vol. 3 (Delphi).

\bibitem{Turchin}
Victor Turchin.
\newblock Computation of the first nontrivial 1-cocycle in the space of long
  knots.
\newblock {\em Mat. Zametki}, 80(1):105--114, 2006.

\bibitem{Vassiliev1990}
V.~A. Vassiliev.
\newblock Cohomology of knot spaces.
\newblock In {\em Theory of singularities and its applications}, volume~1 of
  {\em Adv. Soviet Math.}, pages 23--69. Amer. Math. Soc., Providence, RI,
  1990.

\bibitem{Vassiliev}
V.~A. Vassiliev.
\newblock Combinatorial formulas for cohomology of spaces of knots.
\newblock In {\em Advances in topological quantum field theory}, volume 179 of
  {\em NATO Sci. Ser. II Math. Phys. Chem.}, pages 1--21. Kluwer Acad. Publ.,
  Dordrecht, 2004.

\bibitem{Wall}
C.~T.~C. Wall.
\newblock Geometric properties of generic differentiable manifolds.
\newblock In {\em Geometry and topology ({P}roc. {III} {L}atin {A}mer. {S}chool
  of {M}ath., {I}nst. {M}at. {P}ura {A}plicada {CNP}q, {R}io de {J}aneiro,
  1976)}, pages 707--774. Lecture Notes in Math., Vol. 597. Springer, Berlin,
  1977.

\end{thebibliography}

\end{document}